\documentclass[12pt]{elsarticle}

\usepackage[margin=1in]{geometry}  % set the margins to 1in on all sides
\usepackage{graphicx}              % to include figures
\usepackage{amsmath}               % great math stuff
\usepackage{amsfonts}              % for blackboard bold, etc
\usepackage{amssymb}
\usepackage{amsthm}                % better theorem environments
\usepackage{mathtools}		% to use coloneqq
\usepackage{enumitem}		% enumeration
\usepackage{xcolor}               

\newif\ifshowchanges
\showchangesfalse
\newcommand{\new}[1]{\ifshowchanges\textcolor{red}{#1}\else#1\fi}
\newcommand{\tru}[1]{\ifshowchanges\textcolor{blue}{#1}\else#1\fi}

\usepackage{hyperref}

\newtheorem{thm}{Theorem}[section]
\newtheorem{lem}[thm]{Lemma}
\newtheorem{prop}[thm]{Proposition}
\newtheorem{cor}[thm]{Corollary}

\theoremstyle{definition}
\newtheorem{defi}[thm]{Definition}
\newtheorem{exa}[thm]{Example}
\newtheorem{rem}[thm]{Remark}

\DeclareMathOperator{\id}{id}
\DeclareMathOperator{\cconvh}{\overline{convh}}
\DeclareMathOperator{\convh}{convh}

\newcommand{\xb}{\mathbf{x}}

\newcommand{\cdb}{\mathcal{C}}

\begin{document}

\nocite{*}

\title{The closed convex hull of all associative copulas is a $Z$-set in the family of all bivariate 
Schur concave copulas, and related results}

\author[WT]{Wolfgang Trutschnig\corref{cor}}
\ead{wolfgang@trutschnig.net}
\address[WT]{Department for Artificial Intelligence and Human Interfaces, Hellbrunnerstrasse 34, University of Salzburg, 5020 Salzburg, Austria}

\cortext[cor]{Corresponding author}

\author[MUF]{Manuel Úbeda-Flores}
\ead{mubeda@ual.es}
\address[MUF]{Department of Mathematics, University of Almería, Carretera de Sacramento s/n, 04120 Almería, Spain}

\begin{abstract}
\tru{Very recently, Problem 2.5 stated by Mesiar and Sarkoci in \cite{MeSa} and originally 
formulated by Durante was answered in the negative by \v{S}eliga \cite{Sel}. The author showed that the  closed convex hull of all bivariate associative copulas is a proper subset of the family of all Schur concave copulas, and even \new{computed the exact uniform distance of his counterexamples to the closed convex hull (so that the Hausdorff distance between the two families is at least $\frac32-\sqrt{2}$).} Using the fact that every bivariate associative copula can be represented as a (finite or countably infinite) ordinal sum of Archimedean copulas and the minimum copula $M$, and working with the Markov kernels of Archimedean copulas, we first give an alternative, simple proof of the strict inclusion. We do so by showing that convex combinations of associative copulas can assign only very little mass to \new{staircase-shaped unions of small squares covering} certain line segments parallel to the main diagonal, a result we later
extend to so-called chains and staircases. Building on the first observation, we then go one step further and show that, from a topological perspective, the closed convex hull of all associative copulas is a very small subset of the Schur concave family, namely a so-called $Z$-set, and that the family of associative copulas is, in turn, a $Z$-set in its closed convex hull.
}
\end{abstract}

\begin{keyword}
Copula \sep associativity \sep Schur concavity \sep Markov kernel \sep Baire category
\MSC[2020] 62H05 \sep 60E05 \sep 57N20 \sep 26B25 \sep 52A07
\end{keyword}

\maketitle

\section{Introduction}
It is well-known and straightforward to verify that the widely used family $\cdb^{ar}$ of
bivariate Archimedean copulas is not closed with respect to the uniform metric $d_\infty$ and
that its closure is the family $\cdb^{assoc}$ of associative copulas (see \cite{KMP}).
In accordance with $\cdb^{ar}$, the family $\cdb^{assoc}$ is easily shown not to be convex.
Moreover, according to \cite{Al, Ling}, every associative copula admits a
(finite or countably infinite) ordinal sum representation with Archimedean copulas and
the minimum copula $M$ serving as building blocks.
Since each associative copula $C$ is obviously symmetric and easily verified to be Schur concave 
(see \cite{Alsina84,DuSe03}) 
and since the family $\cdb^{sc}$ of all Schur concave copulas is closed and convex it may seem
natural to conjecture that the closed convex hull of $\cdb^{assoc}$ is
$\cdb^{sc}$, i.e., that $\cconvh(\cdb^{assoc})=\cdb^{sc}$ holds. This very question was
formulated as Problem 2.5 in \cite{MeSa} and goes back to Fabrizio Durante.
Problem 2.5 has recently been answered in the negative by \v{S}eliga \cite{Sel}, who constructed a one-parameter family of Schur concave shuffles of $M$ outside $\cconvh(\cdb^{assoc})$ and determined their distance to $\cconvh(\cdb^{assoc})$. He also proved that every extreme point of $\cconvh(\cdb^{assoc})$ is associative and reformulated the second part of Problem 2.6 in \cite{MeSa} as the question whether the extreme points of $\cconvh(\cdb^{assoc})$ are exactly the associative copulas which are extreme points 
of \tru{the full family of copulas} $\cdb$.
The objective of this note is twofold. Firstly, we give an independent proof of the fact 
that $\cconvh(\cdb^{assoc})$ is a proper subset of
$\cdb^{sc}$, based on a different mechanism: elements of $\cconvh(\cdb^{assoc})$ \tru{assign very little 
mass to \new{staircase-shaped unions of small squares covering} line segments parallel to the main diagonal, whereas Schur concave copulas may assign \new{these sets} much more mass. }
Since the resulting counterexamples are not extreme points of $\cdb$, the extremality argument used in \cite{Sel} does not apply to them. Secondly, and this is our main contribution, we show that 
$\cconvh(\cdb^{assoc})$ is a so-called $Z$-set in $\cdb^{sc}$ and that $\cdb^{assoc}$ is, in turn, a $Z$-set in $\cconvh(\cdb^{assoc})$, \tru{i.e., topologically speaking $\cconvh(\cdb^{assoc})$ is negligible 
in $\cdb^{sc}$ and $\cdb^{assoc}$ is negligible in $\cconvh(\cdb^{assoc})$.} \\
The rest of this note is organized as follows: Section \ref{sec:not} gathers some background
on the involved families of copulas and on Markov kernels, which serve as key tool.
Section \ref{sec:main}
then studies one concrete example of a Schur concave copula which is not an element of
$\cconvh(\cdb^{assoc})$, and, working with tools from infinite-dimensional topology then uses this
example in order to show that $\cconvh(\cdb^{assoc})$ is topologically negligible in $\cdb^{sc}$. 
Section~\ref{sec:chains} generalizes the underlying mass bound to arbitrary staircases, shows that the elements of $\cconvh(\cdb^{assoc})$ do not charge chains off the main diagonal and compares this approach with the one in \cite{Sel}, showing in particular that these necessary conditions are not sufficient. Section~\ref{sec:topo2} contains further topological results, \new{and} Section~\ref{sec:concl} concludes the 
paper.

\section{Notation and Preliminaries}\label{sec:not}
For every integer $N$ we will write $\{1,2,\ldots,N\}=:[N]$. Given an arbitrary
metric space $(S,m)$, the Borel $\sigma$-field on $S$ will be
denoted by $\mathcal{B}(S)$ and $\mathbf{1}_E$ denotes the characteristic/indicator function
of a set $E\subseteq S$. We will write $\lambda$ for the Lebesgue measure on $\mathcal{B}([0,1])$ and
$ \lambda_2$ for the Lebesgue measure on $\mathcal{B}([0,1]^2)$.

In what follows $\cdb$ denotes the family of all bivariate copulas, $d_\infty$ the uniform distance
on $\cdb$. It is well-known that $(\cdb,d_\infty)$ is a compact metric space.
For each copula $C$ the corresponding doubly stochastic measure will be denoted by $\mu_C$,
i.e.\ $\mu_C([0,x]\times[0,y]) = C(x,y)$ for all $x,y\in[0,1]$.
For more background on copulas and doubly stochastic measures see \cite{DuSe} and the references therein.

A copula $C\in\mathcal{C}$ is referred to as associative, if for all $x,y,z\in[0,1]$ we have
that
\begin{equation}\label{eq:assoc}
C\bigl(C(x,y),z\bigr) = C\bigl(x,C(y,z)\bigr)
\end{equation}
holds. As mentioned before, we will let $\cdb^{assoc}$ denote the family of all
associative copulas, it is straightforward to verify that $\cdb^{assoc}$ is closed, so
$(\cdb^{assoc}, d_\infty)$ is a compact metric space.
One important subclass of $\cdb^{assoc}$ is that of Archimedean copulas:
An Archimedean copula $A_\varphi$ is a copula induced by a convex and strictly decreasing
function $\varphi:[0,1]\to[0,\infty]$ with $\varphi(1)=0$, called generator, via
\begin{equation}\label{eq:arch}
A_\varphi(x,y) = \psi\bigl(\varphi(x)+\varphi(y)\bigr), \qquad x,y\in[0,1].
\end{equation}
Thereby $\psi:[0,\infty]\to[0,1]$ denotes the pseudoinverse of $\varphi$, defined by
\begin{equation}\label{eq:psi}
\psi(x) :=
\begin{cases}
\varphi^{-1}(x) & \text{if } x\in[0,\varphi(0+)), \\
0 & \text{if } x \geq \varphi(0+),
\end{cases}
\end{equation}
with $\varphi(x+) := \lim_{t\to x+}\varphi(t)$ denoting the right-hand limit.
Considering that generators are only unique up to a multiplicative constant, in what follows
we always assume that $\varphi$ is normalized in the sense that $\varphi(\frac{1}{2})=1$ holds.
Using this convention generators and Archimedean copulas are in one-to-one correspondence.
\tru{We will refer to $\varphi$ as strict, if and only if $\varphi(0+)=\infty$ and as
non-strict otherwise. $\cdb^{ar,s}$ will denote the family of all strict Archimedean copulas.}

As mentioned before, Archimedean copulas and $M$ are intimately connected with associative copulas:  According to \cite{Al, Ling} every associative copula $C$ admits an
ordinal sum representation in the following sense: there exists some
index set $I\subseteq\mathbb{N}$, non-degenerate intervals $[a_i,b_i], i\in I$, and
copulas $A_i \in\mathcal{C}^{ar}\cup\{M\}$ for every $i \in I$, with the following
properties:
\begin{itemize}
\item[(O1)] $\bigcup_{i \in I} [a_i,b_i]$ is dense in $[0,1]$,
%\footnote{\new{Equality $\bigcup_{i\in I}[a_i,b_i]=[0,1]$ cannot be achieved in general: if, for instance, the set of idempotents of $C$ (i.e., of all $x$ with $C(x,x)=x$) is the Cantor set and $C$ is Archimedean on each removed interval, then every non-degenerate block containing a point of the Cantor set which is not an endpoint of a removed interval would contain idempotents as well as non-idempotents in its interior, which is impossible for blocks as in (O3). Condition (O1) is obtained by choosing as blocks the closures of the connected components of $[0,1]\setminus\{x:C(x,x)=x\}$ and of the interior of $\{x:C(x,x)=x\}$; it is not used in the sequel. Compare also with the formulation of the representation in \cite[Section~2]{Sel}, where the blocks need not cover $[0,1]$.}}
\item[(O2)] the intervals $[a_i,b_i], i\in I$, have pairwise disjoint interior
\item[(O3)] for all $x,y \in [0,1]$ we have that $C(x,y)$ can be written as
 \begin{equation}\label{eq:analytic}
 C(x,y) =
\begin{cases}
a_i + (b_i-a_i)\,A_i \left(\tfrac{x-a_i}{b_i-a_i},\tfrac{y-a_i}{b_i-a_i}\right) & \text{if } (x,y)\in(a_i,b_i)^2 \text{ for some } i \in I \\
M(x,y) & \text{otherwise.}
\end{cases}
 \end{equation}
\end{itemize}
Notice that (contrary to the literature) following \cite{KFT} we explicitly include shrunk copulas 
of $M$ along diagonal blocks.

Recall that a copula $C$ is called Schur concave (see \cite{DP} and the references therein)
if and only if
\begin{equation}\label{eq:sc}
C(x,y) \leq C\big(a x + (1-a) y,\, (1-a) x + a y\big)
\end{equation}
holds for all $x,y,a \in [0,1]$. In other words, a symmetric copula $C$ is Schur concave if 
and only if for every
$b \in [0,1]$ we have that the function
$\delta_b : [\max(-b,\,b-1),\ \min(1-b,\,b)] \rightarrow [0,1]$, defined by
\begin{equation}\label{eq_delta_b}
\delta_b(t):=C(b+t,b-t)
\end{equation}
is non-decreasing on $[\max(-b,\,b-1),0]$ and non-increasing
on $[0,\ \min(1-b,\,b)]$. It follows immediately from \eqref{eq:sc} (with $a=0$) 
that every Schur concave copula is
symmetric, i.e., the transpose $C^t$ of $C$, given by $C^t(x,y):=C(y,x)$, coincides with
$C$.

A Markov kernel from $\mathbb{R}$ to $\mathbb{R}$ is a mapping $K:\mathbb{R}\times\mathcal{B}(\mathbb{R})\to[0,1]$ such that for every fixed $E\in\mathcal{B}(\mathbb{R})$ the mapping $x\mapsto K(x,E)$ is (Borel-)measurable and for every fixed $x\in\mathbb{R}$ the mapping $E\mapsto K(x,E)$ is a probability measure.
Given two real-valued random variables $X,Y$ on a joint
probability space $(\Omega,\mathcal{A},\mathbb{P})$ we say that a Markov kernel $K$ is
a regular conditional distribution of $Y$ given $X$ if
$K(X(\omega),E) = \mathbb{E}(\mathbf{1}_E\circ Y\mid X)(\omega)$ holds $\mathbb{P}$-almost surely for every $E\in\mathcal{B}(\mathbb{R})$.
It is well known that for $X,Y$ as above, a regular conditional distribution of $Y$ given $X$ always exists and is unique for $\mathbb{P}^X$-a.e.\ $x\in\mathbb{R}$,
whereby $\mathbb{P}^X$ denotes the push-forward of $\mathbb{P}$ via $X$, i.e., $\mathbb{P}^X(E) = \mathbb{P}(X^{-1}(E))$ for every $E\in\mathcal{B}(\mathbb{R})$ (see, e.g., \cite{Ka}). \\
In case the random vector $(X,Y)$ has distribution function
$C\in\mathcal{C}$ (restricted to $[0,1]^2$)
in what follows we will let $K_C:[0,1]\times\mathcal{B}([0,1])\to[0,1]$ denote (a version of) the regular conditional distribution of $Y$ given $X$ and refer to it as the \emph{Markov kernel of $C$}. Defining the $x$-section of a set $G\in\mathcal{B}([0,1]^2)$ by $G_x := \{y\in[0,1] : (x,y)\in G\}$, we have the following disintegration formula (see \cite{Ka})
\begin{equation}\label{eq:disint}
\int_{[0,1]} K_C(x,G_x)\,\mathrm{d}\lambda(x) = \mu_C(G),
\end{equation}
hence, in particular,
\begin{equation}\label{eq:disint2}
\int_{[0,1]} K_C(x,E)\,\mathrm{d}\mathbb{P}^X(x) = \int_{[0,1]} K_C(x,E)\,\mathrm{d}\lambda(x) = \lambda(E)
\end{equation}
for every $E\in\mathcal{B}([0,1])$. Moreover, for every $G$ of the form
$G:=[0,x] \times [0,y]$ it follows that
$$
C(x,y)=\int_{[0,x]} K_C(s,[0,y])\mathrm{d} \lambda(s),
$$
so for every fixed $y \in [0,1]$ we have that for $\lambda$-almost every
$x \in [0,1]$ the partial derivative $C_{,1}(x,y)$ of
$C$ with respect to the first coordinate coincides with $K_C(x,[0,y])$.
For more information on conditional expectation and general disintegration we refer to
\cite{Ka}. We will refer to a copula $C$ as completely dependent if and only if
there exists some $\lambda$-preserving transformation $h: [0,1] \rightarrow [0,1]$
(i.e., a transformation fulfilling that the push-forward $\lambda^h$ of $\lambda$ via $h$ coincides with $\lambda$) such that $K(x,E):=\mathbf{1}_E(h(x))$ is a Markov kernel of $C$.
Indicating the dependence on $h$ we will write $C_h$ for the corresponding completely
dependent copula.

Markov kernels were used in \cite{p06} for introducing metrics capable of separating
independence and complete dependence, which, in turn, \new{form} the basis for \new{constructing} dependence measures. Defining $D_p: \cdb \times \cdb \rightarrow [0,1]$ by 
\begin{equation}
D_p(A,B) \;=\; \left( \int_{[0,1]^2} \bigl|K_A(x,[0,y]) - K_B(x,[0,y])\bigr|^{p} \, d\lambda_2(x,y) \right)^{1/p}, \qquad p\in[1,\infty),
\end{equation} 
it can be shown that $D_p$ is a metric for every $p \in [1,\infty)$, that the resulting
metric spaces $(\cdb,D_p)$ are complete and separable, and that (despite not being
equivalent as metrics) all $D_p$ induce the same topology. Following \cite{p06} 
in the sequel we will only work with $D_1$. It can be shown that  
convergence with respect to $D_1$ implies convergence with respect to $d_\infty$ but 
not vice versa (for counterexamples see \cite{p06}). There are, however, special subclasses of 
copulas for which 
convergence with respect to $D_1$ and $d_\infty$ coincide - these families include the \new{Archimedean and the} extreme-value family (see \cite{KFT2}) as well as the associative family $\cdb^{assoc}$ (see \new{\cite{KFT}}).    

Using the fact that according to Rademacher's theorem for each
copula $C$ both partial derivatives $C_{,1}$ and $C_{,2}$ exist $\lambda_2$-almost everywhere,
the property of being Schur concave can easily be translated to Markov kernels.
In fact, a symmetric copula $C$ is Schur concave if and only if
\begin{equation}\label{eq:sc.kernel}
(x-y)\Bigl(K_C(x,[0,y]\new{)}-K_C(y,[0,x])\Bigr)\le 0
\end{equation}
for $\lambda_2$-almost every $(x,y)\in[0,1]^2$.

As last ingredient for proving our main result we need the Markov kernels of Archimedean copulas.
Following \cite{FST} and defining the function $f^0: [0,1] \rightarrow [0,1]$ by
$f^0(x):=\psi(\varphi(0)-\varphi(x))$ (with the convention $\psi(+\infty):=0$)
a version of the Markov kernel $K_{A_\varphi}$ of $A_\varphi$ is given by
\begin{equation}\label{eq:kern.arch}
K_{A_\varphi}(x,[0,y]) =
\begin{cases}
0 & \text{if } x\in(0,1),\ y<f^{0}(x), \\
\tfrac{D^{+}\varphi(x)}{D^{+}\varphi(A_\varphi(x,y))} & \text{if } x\in(0,1),\ y\geq f^{0}(x), \\
1 & \text{if } x\in\{0,1\}.
\end{cases}
\end{equation}
\tru{Notice that for strict Archimedean copulas we have $f^0(x)=0$ for every $x \in (0,1]$, so the 
first line in eq. \eqref{eq:kern.arch} is not needed, and $K_{A_\varphi}(x,\{f^0(x)\})=0$ holds
for every $x \in \new{(0,1)}$. According to \new{\cite{KFT2,KFT}}, $\cdb^{ar,s}$ is dense in $(\cdb^{ar},D_1)$ 
and in  $(\cdb^{assoc},D_1)$. \new{Since convergence with respect to $D_1$ implies uniform convergence, $\cdb^{ar,s}$ is also dense in $(\cdb^{ar},d_\infty)$ (see also \cite{KMP}).}}

\section{Main results}\label{sec:main}
We consider the following family of Schur concave copulas.
\begin{exa}\label{ex:Ca}
For every $a \in [\frac{1}{2},1)$ define the $\lambda$-preserving
function $T_a: [0,1] \rightarrow [0,1]$ by
$$
T_a(x) = (x+a)\,\mathbf{1}_{[0,1-a)}(x) + x\,\mathbf{1}_{[1-a,a]}(x) + (x-a)\,\mathbf{1}_{(a,1]}(x)
$$
and let $C_{T_a}$ denote the completely dependent copula corresponding to $T_a$.
It is straightforward to verify that the copula $C_a$, given by
\begin{equation}\label{eq:def.Ca}
C_a=\tfrac{1}{2} (C_{T_a} + M)
\end{equation}
is Schur concave. In fact, $C_a$ is symmetric (since $T_a\circ T_a=\id$ holds $\lambda$-almost 
everywhere) and each function $\delta_b$ defined according to eq. \eqref{eq_delta_b}
is piecewise linear, non-decreasing on $[\max(-b,\,b-1),0]$ and non-increasing
on $[0,\ \min(1-b,\,b)]$. Moreover, obviously the support of $\mu_{C_a}$ consists of three
line segments, one being the diagonal $\{(x,x): \, x \in [0,1]\}$.
\end{exa}
In what follows we work with the copula $C^*:=C_{\frac{1}{2}}$ and proceed via two lemmas which
use the following sets. For every integer $N \geq 3$ and every
$i \in \{0,1,\ldots,2^{N-1}\}$ define $u_i:=\tfrac{1}{2} + \tfrac{i}{2^N}, \,v_i:=\tfrac{i}{2^N}$ and set
\begin{align}
Q^{2^N}&:=\bigcup_{i=1}^{2^{N-1}} [u_{i-1},u_i] \times [v_{i-1},v_i], \,\,\,\,
O^{2^N}:=\bigcup_{i=1}^{2^{N-1}} (u_{i-1},u_i) \times (v_{i-1},v_i).
\end{align}
Notice that for every $N \geq 3$ the set $Q^{2^N}$ covers the line segment connecting
$(\frac{1}{2},0)$ and $(1,\frac{1}{2})$, which is part of the support of $\mu_{C^*}$
(see Figure \ref{fig:sc_example} for an illustration).
Moreover, letting $\partial S$ denote the boundary of a set $S \subseteq [0,1]^2$, considering
that $Q^{2^N}$ is
the union of finitely many squares, each copula $C \in \cdb$ fulfills
$\mu_C(\partial Q^{2^N})=0$, i.e., $Q^{2^N}$ is a continuity set of each doubly stochastic measure
and
$$
\mu_C \left(Q^{2^N}\right)= \mu_C \left(O^{2^N}\right)
$$
holds. Using this property we can prove the following lemma:
\begin{lem}\label{lem:1}
Let $N \geq 3$ be arbitrary but fixed and suppose that $C^* \in \cconvh(\cdb^{assoc})$.
Then there exists some associative copula $A \in \cdb^{assoc}$ fulfilling
\begin{equation}\label{ineq.implication}
\mu_{A}\left(Q^{2^N}\right) \geq \tfrac{1}{4}(1-\tfrac{1}{2^N}).
\end{equation}
\end{lem}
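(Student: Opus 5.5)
First I compute $\mu_{C^*}(Q^{2^N})$. Since $C^*=\tfrac12(C_{T_{1/2}}+M)$, the measure $\mu_{C^*}$ is half the push-forward of $\lambda$ onto the graph of $T_{1/2}$ plus half the mass of $M$ on the diagonal. The diagonal meets $Q^{2^N}$ at most in boundary points, because $u_{i-1}\ge \tfrac12 > v_i$ for $i<2^{N-1}$ and the last square touches the diagonal only at the corner $(1,\tfrac12)$ when... in any case in a null set, so $M$ contributes $0$. The graph of $T_{1/2}$ contains the segment $\{(x,x-\tfrac12):x\in(\tfrac12,1]\}$, and each square $[u_{i-1},u_i]\times[v_{i-1},v_i]$ contains the piece of this segment with $x\in[u_{i-1},u_i]$. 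Hence $\mu_{C_{T_{1/2}}}(Q^{2^N})\ge \tfrac12$ and $\mu_{C^*}(Q^{2^N})\ge \tfrac14$. This is more than the required $\tfrac14(1-2^{-N})$, which leaves a margin.

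Next, by assumption there is a sequence $D_n\in\convh(\cdb^{assoc})$ with $d_\infty(D_n,C^*)\to0$. Uniform convergence of copulas is equivalent to weak convergence of the doubly stochastic measures. We noted above that $Q^{2^N}$ is a $\mu_{C^*}$-continuity set, so the portmanteau theorem gives $\mu_{D_n}(Q^{2^N})\to\mu_{C^*}(Q^{2^N})\ge\tfrac14$. To avoid any subtlety here, I can instead use the open set $O^{2^N}$ and the lower semicontinuity $\liminf_n\mu_{D_n}(O^{2^N})\ge\mu_{C^*}(O^{2^N})=\mu_{C^*}(Q^{2^N})$. Either way, for $n$ large we get $\mu_{D_n}(Q^{2^N})\ge \mu_{D_n}(O^{2^N})>\tfrac14-\tfrac{1}{4\cdot 2^N}$. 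This strict margin is where the room from the first step is used.

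Finally, fix such an $n$ and write $D_n=\sum_{j=1}^k\alpha_j A_j$ with $A_j\in\cdb^{assoc}$, $\alpha_j\ge0$ and $\sum_j\alpha_j=1$. The map $C\mapsto\mu_C$ is affine, so $\mu_{D_n}(Q^{2^N})=\sum_j\alpha_j\mu_{A_j}(Q^{2^N})$. A convex combination cannot exceed the maximum of its terms, so some $A:=A_j$ satisfies $\mu_A(Q^{2^N})\ge \mu_{D_n}(Q^{2^N})\ge\tfrac14(1-2^{-N})$, which is \eqref{ineq.implication}.

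The only step that needs care is the passage to the limit, namely the continuity-set (or open-set) argument. The remaining steps are bookkeeping, so I expect no real obstacle. The point of the lemma is that it reduces the problem to a single associative copula; the actual contradiction presumably comes in the next lemma, which would bound $\mu_A(Q^{2^N})$ for associative $A$ using the ordinal sum representation and the Markov kernels of Archimedean copulas.
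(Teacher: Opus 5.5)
Your proof is correct and follows the paper's argument: weak convergence plus the portmanteau theorem gives $\mu_{D_n}(Q^{2^N})\ge\tfrac14(1-2^{-N})$ for large $n$, and splitting the convex combination then yields the associative $A$; your open-set variant with $O^{2^N}$ is an equally valid way to pass to the limit. The only blemish is your confused remark about the diagonal, which does not affect the argument: $(1,\tfrac12)$ is not a diagonal point, and since $u_{i-1}-v_i=\tfrac12-2^{-N}>0$, the diagonal does not meet $Q^{2^N}$ at all, so in fact $\mu_{C^*}(Q^{2^N})=\tfrac14$.
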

\begin{proof}
By assumption there exist copulas $C_1,C_2,C_3,\ldots \in \convh(\cdb^{assoc})$ converging
uniformly to $C^*$. Translating to the corresponding doubly
stochastic measures we have that the sequence $(\mu_{C_n})_{n \in \mathbb{N}}$ converges
weakly to $\mu_{C^*}$. Considering $\mu_{C^*}(\partial Q^{2^N})=0$ and using
Portmanteau theorem directly yields
\begin{align*}
\lim_{n \rightarrow \infty} \mu_{C_n}\left(Q^{2^N}\right) = \mu_{C^*}\left(Q^{2^N}\right)=
\tfrac{1}{4},
\end{align*}
so there exists some index $n_0 \in \mathbb{N}$ such that
$\mu_{C_n}\left(Q^{2^N}\right) \geq \frac{1}{4}(1-\frac{1}{2^N})$ for every $n \geq n_0$. \\
Fix some $n \geq n_0$. Then by construction there exists some integer $m_n \in \mathbb{N}$,
associative copulas $A_1,A_2,\ldots, A_{m_n} \in \cdb^{assoc}$ and weights
$\alpha_1,\ldots,\alpha_{m_n} \in [0,1]$ fulfilling $\sum_{i=1}^{m_n} \alpha_i=1$, so that
$C_n=\sum_{i=1}^{m_n} \alpha_i A_i$ holds.
Considering $\mu_{C_n}\left(Q^{2^N}\right) \geq \frac{1}{4}(1-\frac{1}{2^N})$
we can therefore find (at least) one $i \in [m_n]$ with
$$
\mu_{A_i}\left(Q^{2^N}\right) \geq \tfrac{1}{4}(1-\tfrac{1}{2^N})
$$
and the proof is complete.
\end{proof}

\begin{figure}[htbp]
  \centering
  \includegraphics[width=\textwidth]{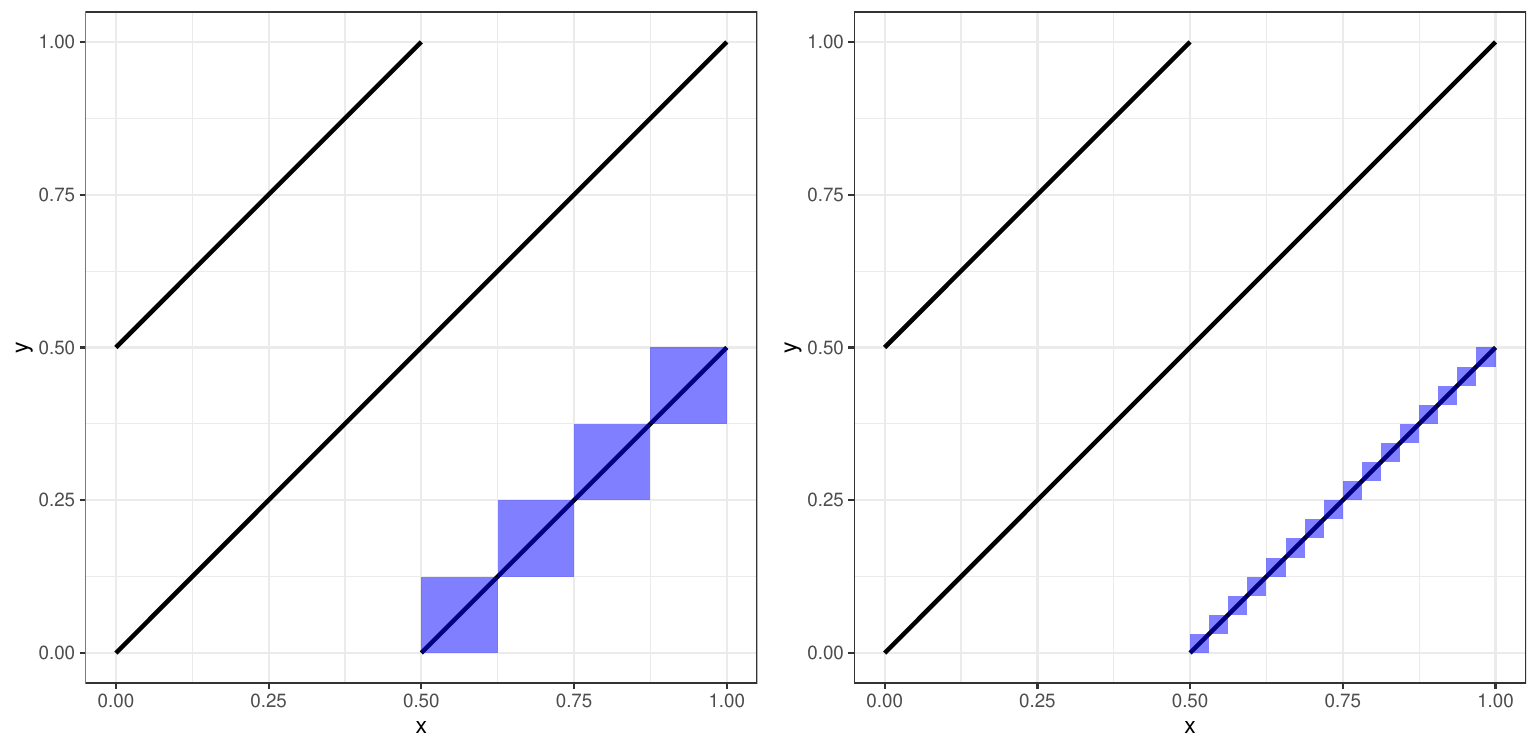}
  \caption{Black: support of the copula $C^*$; blue: the set $Q^{2^N}$ for $N=3$ (left panel)
  and $N=5$ (right panel).}
  \label{fig:sc_example}
\end{figure}

\begin{lem}\label{lem:2}
Every $A \in \cdb^{assoc}$ fulfills
\begin{equation}\label{ineq:little.squares}
\mu_A\left(Q^{2^N}\right) \leq \tfrac{1}{2^N}
\end{equation}
for every $N \geq 3$.
\end{lem}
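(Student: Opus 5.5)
We reduce to a single Archimedean block via the ordinal sum representation (O1)--(O3), and then bound the mass a rescaled Archimedean copula puts on the staircase $Q^{2^N}$ by a telescoping argument built on the Markov kernel \eqref{eq:kern.arch}. Write $h:=2^{-N}$. Since $\mu_A(Q^{2^N})=\mu_A(O^{2^N})$, it suffices to control the open squares $S_i:=(u_{i-1},u_i)\times(v_{i-1},v_i)$.

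\emph{Step 1 (localisation).} By (O3), $\mu_A$ restricted to the complement of $\bigcup_{j\in I}(a_j,b_j)^2$ is concentrated on the diagonal. Blocks with $A_j=M$ also only charge the diagonal. Every $S_i$ lies strictly below the diagonal, since $v_i\le \tfrac12\le u_{i-1}$. Hence $\mu_A(S_i)>0$ forces $S_i\cap(a_j,b_j)^2\neq\emptyset$ for some Archimedean block $j$. Such a block satisfies $a_j<v_i\le\tfrac12\le u_{i-1}<b_j$, so $\tfrac12\in(a_j,b_j)$. By (O2) there is at most one such block. So there is a single interval $[a,b]\ni\tfrac12$ and an Archimedean $A_\varphi$ such that $\mu_A(Q^{2^N})$ equals the mass assigned to $Q^{2^N}\cap(a,b)^2$ by the $[a,b]$-rescaled copy of $A_\varphi$. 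That mass is $(b-a)$ times the $\mu_{A_\varphi}$-mass of the affinely transformed squares. These transformed pieces are again rectangles, of side lengths at most $h/(b-a)$, forming an increasing staircase strictly below the diagonal. The factor $b-a$ cancels the rescaling of side lengths, so it suffices to prove the following. \emph{For any Archimedean $A_\varphi$ and any staircase of rectangles $[x_{i-1},x_i]\times[y_{i-1},y_i]$, $i=1,\dots,n$, with $x_i,y_i$ increasing, $y_n\le x_0$, and all side lengths at most $\eta$, the total $\mu_{A_\varphi}$-mass is at most $\eta$.} By density of $\cdb^{ar,s}$ (weak convergence and the continuity-set argument used in Lemma \ref{lem:1}) we may assume that $\varphi$ is strict.

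\emph{Step 2 (telescoping).} Set $g_i(y):=A_\varphi(x_i,y)-A_\varphi(x_{i-1},y)$. Then the mass of the $i$-th rectangle is $g_i(y_i)-g_i(y_{i-1})$. The goal is the ``shifted monotonicity''
\[
g_{i+1}(y_i)\le g_i(y_i)\quad\text{whenever the rectangles are above/right of each other,}
\]
or, more usefully, a bound of the form $g_i(y_i)-g_i(y_{i-1})\le \Phi(i)-\Phi(i-1)$ for some increasing $\Phi$ with total range at most $\eta$. The natural candidate is
\[
\Phi(i)=\mu_{A_\varphi}\bigl([x_0,x_i]\times[0,y_i]\bigr)-\mu_{A_\varphi}\bigl([x_0,x_i]\times[0,y_{i-1}]\bigr)
\]
or a variant of it. Using the disintegration \eqref{eq:disint} with the strict kernel $K(x,[0,y])=\varphi'(x)/\varphi'(A_\varphi(x,y))$, the mass of the $i$-th rectangle is $\int_{x_{i-1}}^{x_i}K(x,(y_{i-1},y_i])\,d\lambda(x)$. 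Convexity of $\varphi$ means $|\varphi'|$ is nonincreasing. Together with $A_\varphi(x,y)\le y<x$, this yields that for each fixed $x$, the conditional mass of $(y_{i-1},y_i]$ is dominated by what a ``countermonotone'' transport would give. Intuitively, Archimedean level curves are decreasing, so the mass of $A_\varphi$ below the diagonal flows along decreasing curves, and an increasing staircase can intersect each such curve in essentially one rectangle only. I would make this precise by comparing, for $x\in[x_{i-1},x_i]$ and $x'\in[x_{j-1},x_j]$ with $i<j$, the level values $A_\varphi(x,y_i)$ and $A_\varphi(x',y_{j-1})$. The claim is that the $y$-windows seen through the kernel are disjoint in the ``$\varphi$-scale''. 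Summing then gives a total mass of at most $\sup_x K\bigl(x,[0,y_n]\bigr)$ integrated over a set of $x$-measure at most $\eta$, hence at most $\eta$. A symmetric argument in $y$ (using $\mu_{A_\varphi}$ symmetric) gives the same bound.

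\emph{Main obstacle.} The crux is Step 2: turning convexity of $\varphi$ into the disjointness/telescoping that prevents an increasing staircase from collecting mass from many rectangles. I would first attempt it in the $\varphi$-coordinates $s=\varphi(x)$, $t=\varphi(y)$, where the level sets of $A_\varphi$ become lines $s+t=\text{const}$. There the staircase becomes a decreasing-in-both-coordinates chain of rectangles, and the mass of $A_\varphi$ on $\{s+t=c\}$ can only meet one of them in a set of positive conditional measure per level $c$. The remaining (and more delicate) part is to control the conditional weights along each level line so that the total over all levels is at most the side length $\eta$, rather than merely at most $n\eta$.
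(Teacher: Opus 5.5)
\textbf{There is a genuine gap in Step 2.}

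Your Step 1 is sound and matches what the paper does: localisation to the unique block $(a,b)^2$ with $\tfrac12\in(a,b)$, rescaling to a staircase with $y_n\le x_0$, and reduction to strict generators via density and the continuity-set argument. The gap is Step 2, which is the whole content of the lemma and which you never carry out. The candidate $\Phi$ is not shown to be monotone with range at most $\eta$. The ``countermonotone transport'' and level-line remarks are heuristics. Your final paragraph explicitly leaves open the estimate that separates $\eta$ from $n\eta$. The one concrete conclusion you do state is also not justified. The $x$-intervals $(x_{i-1},x_i)$ together have Lebesgue measure $x_n-x_0$, which can be of order $n\eta$. So bounding by $\sup_x K(x,[0,y_n])$ integrated over ``a set of $x$-measure at most $\eta$'' does not follow, and as written you only get the trivial bound $x_n-x_0$.

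\textbf{The missing idea.} The strict kernel factorises into a function of $x$ times a function of the level: $K_{A_\varphi}(x,[0,y])=|D^+\varphi(x)|\,G(A_\varphi(x,y))$ with $G:=1/|D^+\varphi|$ non-decreasing. Hence the $i$-th rectangle carries mass at most
$\int_{(x_{i-1},x_i)}|D^+\varphi(x)|\,\bigl(G(A_\varphi(x,y_i))-G(A_\varphi(x,y_{i-1}))\bigr)\,d\lambda(x)$.

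Your disjointness of level windows is the right observation, and it enters exactly here. Set the corner levels $c_i:=A_\varphi(x_i,y_i)$. For $x\in(x_{i-1},x_i)$, monotonicity of $A_\varphi$ and $G$ gives $G(A_\varphi(x,y_i))\le G(c_i)$ and $G(A_\varphi(x,y_{i-1}))\ge G(c_{i-1})$.

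The weights need no level-line analysis at all: simply use $|D^+\varphi(x)|\le|D^+\varphi(x_0)|$ for $x\ge x_0$. Then:
\begin{itemize}
\item The $i$-th mass is at most $(x_i-x_{i-1})\,|D^+\varphi(x_0)|\,\bigl(G(c_i)-G(c_{i-1})\bigr)\le \eta\,|D^+\varphi(x_0)|\,\bigl(G(c_i)-G(c_{i-1})\bigr)$.
\item The sum telescopes to at most $\eta\,|D^+\varphi(x_0)|\,G(c_n)\le\eta\,|D^+\varphi(x_0)|/|D^+\varphi(y_n)|$, since $c_n\le y_n$.
\item This is $\le\eta$ because $y_n\le x_0$ and $|D^+\varphi|$ is non-increasing.
\end{itemize}

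So the factor $\eta$ comes from the length of a single $x$-interval. The telescoping corner levels, together with $y_n\le x_0$, absorb everything else. This is precisely the paper's argument, and without it your proposal does not establish the lemma.
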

\begin{proof}
We first reduce the problem to the \tru{strict} Archimedean family $\cdb^{ar,s}$ (step 1) and 
then prove the
corresponding mass concentration result there (step 2). \\
(Step 1) As mentioned in Section \ref{sec:not}, every associative copula $A$ can be
expressed according to eq. \eqref{eq:analytic}.
Suppose that $(a_i,b_i), i \in I \subseteq \mathbb{N}$
with $a_i<b_i$ for every $i \in I$
denotes the intervals corresponding to the ordinal sum, i.e., $\mu_A$ restricted
to the square $(a_i,b_i)^2$ coincides with the push-forward of some Archimedean copula
$A_{\varphi_i}$
or $M$ under the affine similarity $T_i(x,y)=(a_i+x(b_i-a_i), a_i+y(b_i-a_i))$, weighted by
$b_i-a_i$. Obviously at most one block $(a_i,b_i)^2$ can have non-empty intersection with $Q^{2^N}$
- this block has to fulfill $a_i < \frac{1}{2} < b_i$. 
\tru{Outside these squares $\mu_A$ is concentrated on the diagonal, which does not intersect $Q^{2^N}$.}
Since the case where no block intersects $Q^{2^N}$ and the case where the only block
intersecting $Q^{2^N}$ carries a shrunk copy $(b_i-a_i)\mu^{T_i}_M$ of $M$ are trivial
(in both cases the mass is $0$ so ineq. \eqref{ineq:little.squares} certainly holds),
it suffices to focus on $(b_i-a_i)\mu_{A_\varphi}^{T_i}$ with $A_\varphi \in \cdb^{ar}$.
Simplifying notation we denote the unique block by $(a,b)^2$, set $T:=T_i$, and let $A_\varphi$ denote the corresponding
Archimedean copula with normalized generator $\varphi$.
 We want to show that $\mu_{A_\varphi}^{T}$ fulfills
\begin{equation}\label{ineq:arch}
\mu_A\left(Q^{2^N}\right)=(b-a)\,\mu_{A_\varphi}\left(T^{-1}\left(Q^{2^N}\right)\right)=
(b-a)\,\mu_{A_\varphi}^T\left(Q^{2^N}\right) \leq \tfrac{1}{2^N}.
\end{equation}
\tru{\new{Doing} so, using denseness of $\cdb^{ar,s}$ in $(\cdb^{ar},d_\infty)$ (mentioned at the end of Section 2) \new{and the fact that $A_\varphi\mapsto\mu_{A_\varphi}(T^{-1}(Q^{2^N}))$ is continuous with respect to $d_\infty$ (since $T^{-1}(Q^{2^N})\cap[0,1]^2$ is a finite union of rectangles),} it is enough to prove ineq. \eqref{ineq:arch} for strict $A_\varphi$. } \\
(Step 2) Define $f: \mathbb{R} \rightarrow \mathbb{R} $ by $f(x)=\frac{x-a}{b-a}$ and set
$$
\hat{u}_i:=\min\{f(u_i),1\}, \hat{v}_i:=\max\{f(v_i),0\}
$$
for every $i \in \{0,1,\ldots,2^{N-1}\}$.
Then considering
\begin{equation}
\hat{Q}^{2^N}:= \bigcup_{i=1}^{2^{N-1}}[\hat{u}_{i-1},\hat{u}_i] \times
[\hat{v}_{i-1},\hat{v}_i], \,\,\,\,
\hat{O}^{2^N}:=\bigcup_{i=1}^{2^{N-1}} (\hat{u}_{i-1},\hat{u}_i) \times (\hat{v}_{i-1},\hat{v}_i).
\end{equation}
we obviously have $\mu_C(T^{-1} (Q^{2^N} ) \cap (0,1)^2)=\mu_C(\hat{Q}^{2^N})=
\mu_C(\hat{O}^{2^N})$ for every copula $C$, so in particular for $A_\varphi$.
Using the convention $\frac{b}{\infty}=0$
for every $b > 0$, applying disintegration, and considering the Markov kernel
$K_{A_\varphi}$ of $A_\varphi$ as derived in \cite{FST} it follows that
\begin{align*}
\mu_{A_\varphi}\left(T^{-1}\left(Q^{2^N}\right)\right)&=
\mu_{A_\varphi}\left(\hat{O}^{2^N}\right)
= \sum_{i=1}^{2^{N-1}} \int_{(\hat{u}_{i-1},\hat{u}_i)} K_{A_\varphi}\left(x,(\hat{v}_{i-1},\hat{v}_i) \right) \,d\lambda(x) \\
&\leq \sum_{i=1}^{2^{N-1}} \int_{(\hat{u}_{i-1},\hat{u}_i)} K_{A_\varphi}\left(x,(\hat{v}_{i-1},\hat{v}_i] \right) \,d\lambda(x) \\
&= \sum_{i=1}^{2^{N-1}} \int_{(\hat{u}_{i-1},\hat{u}_i)} K_{A_\varphi}\left(x,[0,\hat{v}_{i}]\right) -
K_{A_\varphi}\left(x,[0,\hat{v}_{i-1}] \right) \,d\lambda(x) \\
&= \sum_{i=1}^{2^{N-1}} \int_{(\hat{u}_{i-1},\hat{u}_i)} \vert D^+\varphi(x)\vert
\left( \tfrac{1}{\vert D^+ \varphi\vert (A_\varphi(x,\hat{v}_{i})) } -
\tfrac{1}{\vert D^+ \varphi\vert (A_\varphi(x,\hat{v}_{i-1})) }\right) \,d\lambda(x). 
\end{align*}
Considering the facts that $\vert D^+\varphi\vert $ is non-increasing, that $A_\varphi$
is monotonically increasing in both arguments, that $\hat{u}_i > 0$, and that
$$
\vert D^+\varphi(x) \vert \leq \vert D^+\varphi(\hat{u}_0) \vert < \infty
$$
holds for every $x \in [\hat{u}_0,1]$, altogether we obtain
\begin{align*}
\mu_{A_\varphi}\left(T^{-1}\left(Q^{2^N}\right)\right)&\leq
\sum_{i=1}^{2^{N-1}} \int_{(\hat{u}_{i-1},\hat{u}_i)} \vert D^+\varphi(x)\vert
\left( \tfrac{1}{\vert D^+ \varphi\vert (A_\varphi(\hat{u}_{i},\hat{v}_{i})) } -
\tfrac{1}{\vert D^+ \varphi\vert (A_\varphi(\hat{u}_{i-1},\hat{v}_{i-1})) }\right) \,d\lambda(x) \\
&\leq \sum_{i=1}^{2^{N-1}}\left( \tfrac{1}{\vert D^+ \varphi\vert (A_\varphi(\hat{u}_{i},\hat{v}_{i})) } -
\tfrac{1}{\vert D^+ \varphi\vert (A_\varphi(\hat{u}_{i-1},\hat{v}_{i-1})) }\right)
\vert D^+\varphi(\hat{u}_0)\vert \, \tfrac{1}{2^N} \tfrac{1}{b-a} \\
&= \vert D^+\varphi(\hat{u}_0)\vert \, \tfrac{1}{2^N}\tfrac{1}{b-a}\, \sum_{i=1}^{2^{N-1}}\left( \tfrac{1}{\vert D^+ \varphi\vert (A_\varphi(\hat{u}_{i},\hat{v}_{i})) } -
\tfrac{1}{\vert D^+ \varphi\vert (A_\varphi(\hat{u}_{i-1},\hat{v}_{i-1})) }\right) \\
&=  \vert D^+\varphi(\hat{u}_0)\vert \, \tfrac{1}{2^N} \tfrac{1}{b-a}
\left( \tfrac{1}{\vert D^+ \varphi\vert (A_\varphi(1,\hat{v}_{2^{N-1}})) } -
\tfrac{1}{\vert D^+ \varphi\vert (A_\varphi(\hat{u}_{0},0)) }\right) \\
&\leq \vert D^+\varphi(\hat{u}_0)\vert \, \tfrac{1}{2^N} \tfrac{1}{b-a}
 \tfrac{1}{\vert D^+ \varphi(\hat{v}_{2^{N-1}})\vert}  \\
&\leq  \tfrac{1}{2^N} \tfrac{1}{b-a}
\end{align*}
Having this directly yields eq. \eqref{ineq:arch} and the proof is complete.
\end{proof}

\begin{thm}\label{thm:proper}
The family $\cconvh(\cdb^{assoc})$ is a proper subset of $\cdb^{sc}$.
In particular, the copula $C^*$ is not an element of $\cconvh(\cdb^{assoc})$.
\end{thm}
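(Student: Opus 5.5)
The plan is to argue by contradiction, combining Lemma \ref{lem:1} and Lemma \ref{lem:2} for a single fixed $N$. First we record that $C^*=C_{\frac12}$ belongs to $\cdb^{sc}$; this was verified in Example \ref{ex:Ca} for every $a\in[\frac12,1)$. Next we recall that every associative copula is Schur concave and that $\cdb^{sc}$ is closed and convex. Hence $\cconvh(\cdb^{assoc})\subseteq\cdb^{sc}$. It therefore suffices to show $C^*\notin\cconvh(\cdb^{assoc})$.

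Suppose, contrary to the claim, that $C^*\in\cconvh(\cdb^{assoc})$, and fix $N=3$. By Lemma \ref{lem:1} there exists some $A\in\cdb^{assoc}$ with
\[
\mu_A\bigl(Q^{8}\bigr)\ \ge\ \tfrac14\bigl(1-\tfrac18\bigr)=\tfrac{7}{32}.
\]
On the other hand, Lemma \ref{lem:2} gives $\mu_A(Q^{8})\le \tfrac18=\tfrac{4}{32}$ for every associative copula. These two bounds are incompatible, so $C^*\notin\cconvh(\cdb^{assoc})$. Since $C^*\in\cdb^{sc}$, the inclusion is strict.

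The only point needing care is the quantitative comparison: we need $\tfrac14(1-2^{-N})>2^{-N}$, i.e.\ $2^N>5$. This holds for every $N\ge3$, which explains the standing assumption $N\ge 3$ in both lemmas. All the genuine work, namely the mass bound for Archimedean kernels and the Portmanteau argument, is already contained in the two lemmas, so there is no real obstacle left.
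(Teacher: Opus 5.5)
Your proof is correct and is the paper's argument: the paper also derives the theorem directly from Lemmas~\ref{lem:1} and~\ref{lem:2} together with the inequality $\tfrac14(1-2^{-N})>2^{-N}$ for $N\ge 3$. Your explicit instance $N=3$ ($\tfrac{7}{32}>\tfrac{4}{32}$) and your remark that $\cconvh(\cdb^{assoc})\subseteq\cdb^{sc}$ holds because associative copulas are Schur concave and $\cdb^{sc}$ is closed and convex just spell out what the paper leaves implicit.
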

\begin{proof}
Direct consequence of the previous two lemmas and the fact that for $N \geq 3$ we have
$\tfrac{1}{4}(1-\tfrac{1}{2^N}) > \frac{1}{2^N}$.
\end{proof}
The first assertion of Theorem~\ref{thm:proper} has recently also been obtained by \v{S}eliga \cite{Sel} by means of a Schur concave shuffle of $M$; the mechanism behind our proof is a different one, see Section~\ref{sec:chains}.

\begin{rem}
The copula $C^*=C_{\frac{1}{2}}$ was chosen in order to avoid unnecessary computations
- in this case each corner of the set $Q^{2^N}$ is of the form $\frac{i}{2^N}$.
Nevertheless, proceeding ana\-lo\-gously yields that each of the copulas $C_a$ defined
according to eq. \eqref{eq:def.Ca} is Schur concave but not an element of $\cconvh(\cdb^{assoc})$.
In other words: the aforementioned construction yields uncountably many elements of
$\cdb^{sc} \setminus \cconvh(\cdb^{assoc})$.
\end{rem}

We now go one step further and prove that from a topological perspective
$\cconvh(\cdb^{assoc})$ is a very small subset of $\cdb^{sc}$. Doing so, we will use some
tools from infinite-dimensional topology, which we first recall for the sake of completeness
(for more background we refer to the textbooks \cite{Sakai2020,vanMill1989}).
$\mathcal{H}=[-1,1]^\mathbb{N}$ will denote the Hilbert cube, elements of
$\mathcal{H}$ will be denoted by bold symbols. We endow $\mathcal{H}$ with
the metric $\rho$, given
by $\rho(\mathbf{x},\mathbf{y})=\sum_{i=1}^\infty \frac{1}{2^i}\vert x_i-y_i \vert $.
It is straightforward to verify that $\rho$ is a metrization of coordinate-wise convergence
and that $(\mathcal{H},\rho)$ is a convex and compact metric space.
The Hilbert cube $(\mathcal{H},\rho)$ is usually regarded as `prototype' compact metric space,
since every compact metric space $(\Omega,m)$ is homeomorphic to a compact subset
of $(\mathcal{H},\rho)$ (\cite{vanMill1989}).

\begin{defi}
A subset $A$ of a metric space $(\Omega,m)$ is called a \emph{$Z$-set} if it is closed and for every
continuous mapping $\varepsilon: \Omega \rightarrow (0,1)$ there exists some continuous function
$f: \Omega \rightarrow \Omega\setminus A$ such that $m(x,f(x)) < \varepsilon(x)$ holds for all
$x \in \Omega$.
\end{defi}
In the case $(\Omega,m)$ is compact, it obviously suffices to consider $\varepsilon$ constant.
Naturally we interpret $Z$-sets as topologically negligible (and hence topologically small) sets.
It is straightforward to verify that every $Z$-set in $(\Omega,m)$ is nowhere dense (and hence of
first Baire category) in $(\Omega,m)$, and that closed subsets of $Z$-sets are $Z$-sets.
Moreover, every singleton in $\mathcal{H}$ is a $Z$-set and
for every fixed $i \in \mathbb{{N}}$
so is the set $\{\xb \in \mathcal{H}: x_i \in \{-1,1\}\}$.

The following theorem will be key for showing that $\cconvh(\cdb^{assoc})$ is a very small subset
of the compact metric space $(\cdb^{sc},d_\infty)$ - the proof of a more general case involving
finitely many sets $S_i$ can be found in \cite{DFST}.

\begin{thm}\label{thm:main.tool}
Suppose that $S_1 \subset S_2 $ are convex, compact subsets
of a normed linear space $(E, \Vert \cdot \Vert)$, that $S_1$ is homeomorphic to the Hilbert cube
$\mathcal{H}$, and that $S_{1}$ is a $Z$-set in $S_2$.
Then there exists a homeomorphism $h : [0,1] \times \mathcal{H} \to S_2$ fulfilling
\begin{align*}
h(\{0\}\times\mathcal{H}) &= S_1.
\end{align*}
\end{thm}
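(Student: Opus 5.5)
The plan is to reduce the statement to two classical results of infinite-dimensional topology: Keller's theorem and Anderson's $Z$-set unknotting theorem. Keller's theorem says that every infinite-dimensional compact convex subset of a normed linear space (more generally, of a metrizable locally convex space) is homeomorphic to $\mathcal{H}$. The unknotting theorem says that every homeomorphism between two $Z$-sets of $\mathcal{H}$ extends to a homeomorphism of $\mathcal{H}$ onto itself. Since both are topological statements, I will first note that for compact metric spaces the $Z$-set property only needs constant $\varepsilon$, and hence does not depend on the chosen compatible metric. Consequently $Z$-sets are carried to $Z$-sets by homeomorphisms.

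\emph{Step 1: $S_2\cong\mathcal{H}$.} The set $S_2$ is compact and convex, and it contains $S_1$, which is homeomorphic to $\mathcal{H}$. Hence $S_2$ cannot be finite-dimensional: otherwise $S_1$ would sit inside a finite-dimensional affine subspace and would be a finite-dimensional compactum, whereas $\mathcal{H}$ contains cubes $[-1,1]^n$ for every $n$. Keller's theorem then gives a homeomorphism $g_2:S_2\to\mathcal{H}$.

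\emph{Step 2: the model pair.} The product $[0,1]\times\mathcal{H}$ is affinely homeomorphic to $[-1,1]\times[-1,1]^{\mathbb{N}}\cong\mathcal{H}$. Let $\rho'$ be the sum metric on $[0,1]\times\mathcal{H}$. The slice $\{0\}\times\mathcal{H}$ is a $Z$-set in it: the continuous maps $(t,\xb)\mapsto(\max\{t,\varepsilon/2\},\xb)$ move points by less than $\varepsilon$ and miss $\{0\}\times\mathcal{H}$. Moreover $\{0\}\times\mathcal{H}\cong\mathcal{H}$. Choose a homeomorphism $g_1:[0,1]\times\mathcal{H}\to\mathcal{H}$. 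By the invariance remark, $Z_1:=g_1(\{0\}\times\mathcal{H})$ and $Z_2:=g_2(S_1)$ are both $Z$-sets in $\mathcal{H}$, and both are homeomorphic to $\mathcal{H}$.

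\emph{Step 3: unknotting.} Fix a homeomorphism $\phi:Z_1\to Z_2$, which exists because both sets are homeomorphic to $\mathcal{H}$. By Anderson's theorem, $\phi$ extends to a homeomorphism $\Phi:\mathcal{H}\to\mathcal{H}$. Then
\[
h:=g_2^{-1}\circ\Phi\circ g_1:[0,1]\times\mathcal{H}\to S_2
\]
is a homeomorphism. It satisfies $h(\{0\}\times\mathcal{H})=g_2^{-1}(\Phi(Z_1))=g_2^{-1}(Z_2)=S_1$, as required.

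The substantive content lies entirely in the two quoted theorems; I would cite them (e.g.\ from \cite{vanMill1989,Sakai2020}) rather than reprove them. The only points needing care are the infinite-dimensionality of $S_2$ in Step 1 and the transfer of the $Z$-set property under homeomorphisms. For the latter, compactness is what lets constant $\varepsilon$ suffice. Uniform continuity of a homeomorphism and of its inverse then converts small displacements in one metric into small displacements in the other.
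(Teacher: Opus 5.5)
Your proof is correct. It cannot follow the paper's argument, because the paper gives none: it states the result and refers to \cite{DFST} for a more general version involving finitely many sets. What you do instead is a self-contained reduction to two classical theorems, and each step holds.
\begin{itemize}
\item $S_2$ is linearly infinite-dimensional because it contains a copy of $\mathcal{H}$. So Keller's theorem (essentially the Keller--Dobrowolski--Toru\'nczyk result the paper itself quotes in the proof of Lemma~\ref{lem}) gives $S_2\cong\mathcal{H}$.
\item In compact metric spaces the $Z$-set property is independent of the compatible metric and is preserved by homeomorphisms.
\item $\{0\}\times\mathcal{H}$ is a $Z$-set in $[0,1]\times\mathcal{H}\cong\mathcal{H}$.
\item Anderson's unknotting theorem then finishes the argument. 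This is the same tool the paper uses for Corollary~\ref{cor:assocZ}(iii).
\end{itemize}

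Your route buys two things beyond the bare statement. First, it shows that convexity of $S_1$ is never used. You only need that $S_2$ is compact and convex, that $S_1\cong\mathcal{H}$, and that $S_1$ is a $Z$-set in $S_2$. Second, Anderson's theorem extends an arbitrary prescribed homeomorphism between $Z$-sets. So for any given homeomorphism $\eta:\{0\}\times\mathcal{H}\to S_1$ you may choose $\phi:=g_2\circ\eta\circ g_1^{-1}|_{Z_1}$. The resulting $h$ then restricts to $\eta$ on $\{0\}\times\mathcal{H}$. This control over the boundary map is what lets your construction be iterated along finite nested chains $S_1\subset S_2\subset\cdots\subset S_n$.
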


\noindent We want to apply this result to the situation of $E$ being
the Banach space $(C([0,1]^2),\Vert \cdot \Vert_\infty)$
of all continuous real-valued functions on $[0,1]^2$, $S_1=\cconvh(\cdb^{assoc})$ and $S_2=\cdb^{sc}$.
Obviously both sets $\cconvh(\cdb^{assoc})$ and $\cdb^{sc}$ are convex and compact subsets of
$(C([0,1]^2),\Vert \cdot \Vert_\infty)$, so in order to assure applicability of
Theorem \ref{thm:main.tool} it suffices to prove the following lemma:

\begin{lem}\label{lem}
$\cconvh(\cdb^{assoc})$ is a $Z$-set in $\cdb^{sc}$ and
$(\cconvh(\cdb^{assoc}),d_\infty)$ is homeomorphic to $(\mathcal{H},\rho)$.
\end{lem}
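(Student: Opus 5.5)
Two things have to be shown: that $\cconvh(\cdb^{assoc})$ is a $Z$-set in $\cdb^{sc}$, and that it is homeomorphic to $\mathcal{H}$.

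\emph{The $Z$-set property.} The set $\cconvh(\cdb^{assoc})$ is closed by definition. Since $\cdb^{sc}$ is compact, it suffices to take $\varepsilon>0$ constant and produce a continuous map $f:\cdb^{sc}\to\cdb^{sc}\setminus\cconvh(\cdb^{assoc})$ with $d_\infty(C,f(C))<\varepsilon$. We use a convex-combination push towards $C^*$:
\[
f(C):=(1-t)C+tC^*, \qquad t\in(0,1) \text{ small}.
\]
Because $\cdb^{sc}$ is convex, $f(C)\in\cdb^{sc}$. The map $f$ is clearly continuous, and $d_\infty(C,f(C))\le t\,d_\infty(C,C^*)\le t$, so it suffices to choose $t<\varepsilon$.

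It remains to show that $f(C)\notin\cconvh(\cdb^{assoc})$. For this we sharpen Lemma~\ref{lem:1}. We have $\mu_{f(C)}(Q^{2^N})\ge t\,\mu_{C^*}(Q^{2^N})=t/4$ for every $N$. Suppose $f(C)\in\cconvh(\cdb^{assoc})$. Exactly as in the proof of Lemma~\ref{lem:1}, using that $Q^{2^N}$ is a continuity set and the Portmanteau theorem, some $A\in\cdb^{assoc}$ would then satisfy $\mu_A(Q^{2^N})\ge t/4-\eta$ for any prescribed $\eta>0$. By Lemma~\ref{lem:2}, $\mu_A(Q^{2^N})\le 2^{-N}$. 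Choosing $N$ with $2^{-N}<t/8$ and $\eta=t/8$ gives a contradiction. Equivalently, the functional $B\mapsto\mu_B(Q^{2^N})$ is affine and $d_\infty$-continuous on $\cdb$, and it is bounded by $2^{-N}$ on $\cdb^{assoc}$, hence on the closed convex hull, while $f(C)$ has value at least $t/4$. This is the key observation and it is essentially routine given Lemmas~\ref{lem:1} and~\ref{lem:2}.

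\emph{Homeomorphism with $\mathcal{H}$.} By Keller's theorem, every infinite-dimensional compact convex subset of a Banach space (more generally, of a Fr\'echet space) is homeomorphic to $\mathcal{H}$. Since $\cconvh(\cdb^{assoc})$ is compact and convex in $(C([0,1]^2),\Vert\cdot\Vert_\infty)$, it only remains to show that it is infinite-dimensional, i.e.\ that it contains infinitely many affinely independent points.

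We would take the ordinal sums $O_n$ of $n$ copies of the product copula $\Pi$ on the blocks $[\tfrac{i-1}{n},\tfrac in]$; these are associative. We would then show that finitely many distinct $O_n$ are linearly independent as functions. Considering which squares carry absolutely continuous mass, $O_n$ has density on $\bigcup_i(\tfrac{i-1}{n},\tfrac in)^2$ and is singular elsewhere. Restricting to primes $n=p_1<p_2<\dots$, the block boundaries of different members are pairwise distinct, and one can compare densities near the point $(1/p_k,1/p_k)$: for the largest $p_k$ present this point lies on the boundary of a block of $O_{p_k}$ but in the interior of a block of every other $O_{p_j}$. Hence in any vanishing linear combination the coefficient of $O_{p_k}$ is zero, and induction finishes the argument.

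An even simpler alternative is the family of Clayton copulas: they are associative and linearly independent, since these are distinct analytic functions of the form $(x^{-\theta}+y^{-\theta}-1)^{-1/\theta}$, which can be verified via their behaviour along the diagonal. I expect this infinite-dimensionality check to be the only mildly delicate point, and I would present the ordinal-sum argument since it is elementary.
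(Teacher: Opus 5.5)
Your proposal is correct and essentially the paper's proof: for the $Z$-set part you use the same push $C\mapsto(1-t)C+tC^*$ together with the affine, $d_\infty$-continuous functional $B\mapsto\mu_B(Q^{2^N})$, which is bounded by $2^{-N}$ on $\cconvh(\cdb^{assoc})$ (you merely decouple $t$ and $N$, whereas the paper sets $t=2^{-(N-3)}$), and for the Hilbert-cube part you use the same Keller-type theorem. The only real difference is the infinite-dimensionality witness. You use ordinal sums of $\Pi$ on $n$ equal blocks, separated by comparing densities near $(\frac1n,\frac1n)$; this works for any distinct $n$, so primes are not needed, and these measures have no singular part at all. The paper instead uses $W$/$M$ ordinal sums, separated by the mass on the anti-diagonal segments $\Gamma_a$. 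Your Clayton aside would need more than ``distinct analytic functions'' to yield linear independence, but you do not rely on it.
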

\begin{proof}
(i) We start with the second assertion. According to the Keller--Dobrowolski--Toru\'n\-czyk theorem
(see \cite[Theorem 2.11, Remark 2.12]{DFST} and original references therein) it suffices
to show that $\cconvh(\cdb^{assoc})$ contains infinitely many linearly independent elements
(linearly independent in the Banach space $(C([0,1]^2),\Vert \cdot \Vert_\infty)$), which
can be done as follows:
For every $a \in (0,1)$ set $I:=\{1,2\}, a_1=0,b_1=a_2=a,b_2=1$ as well as $A_1=W,A_2=M$, and
let $O_a$ denote the corresponding ordinal sum defined according to eq. \eqref{eq:analytic}.
Then the family $\{O_a: \, a \in (0,1)\}$ is linearly independent.
In fact, suppose that for $n \in \mathbb{N}$ and $0< a_1 < a_2 < \ldots < a_n < 1$
there are constants $e_1,\ldots,e_n \in \mathbb{R}$ such that
$\sum_{i=1}^n e_i O_{a_i}= \mathbf{0} \in C([0,1]^2)$. Then the finite signed measure
$\vartheta$, defined by
$$
\vartheta(G)=\sum_{i=1}^n e_i \mu_{O_{a_i}}(G),\quad G \in \mathcal{B}([0,1]^2)
$$
is the zero-measure on $\mathcal{B}([0,1]^2)$.
For the set $\Gamma_{a_1}:=\{(t,a_1-t): t \in [0,a_1]\} \in \mathcal{B}([0,1]^2)$ we therefore get
$$
0=\vartheta(\Gamma_{a_1}) = \sum_{i=1}^n e_i \mu_{O_{a_i}}(\Gamma_{a_1}) = e_1 \mu_{O_{a_1}}(\Gamma_{a_1}) = e_1a_1,
$$
implying $e_1=0$.
Consi\-dering the sets $\Gamma_{a_i}:=\{(t,a_i-t): t \in [0,a_i]\} \in \mathcal{B}([0,1]^2)$ and
proceeding inductively eventually shows $e_1=e_2=\ldots=e_n=0$. \\
(ii) For showing the first assertion first notice that proceeding as in the proof of the two lemmas
and using Portmanteau theorem we have that every copula $C \in \cconvh(\cdb^{assoc})$ fulfills
\begin{equation}\label{ineq:little.squares2}
\mu_C\left(Q^{2^N}\right) \leq \tfrac{1}{2^N}
\end{equation}
for every $N \geq 3$. Having this, we can now show that $\cconvh(\cdb^{assoc})$ is
a $Z$-set in $\cdb^{sc}$ as follows: Let $\varepsilon \in (0,1)$ be arbitrary but fixed, choose
$N \geq 7$ sufficiently large so that $\frac{1}{2^{N-4}}<\varepsilon$ holds, and define the
continuous mapping
$f: \cdb^{sc} \rightarrow \cdb^{sc}$ by
$$
f(C) = \tfrac{1}{2^{N-3}} C^* \, + \, \left(1- \tfrac{1}{2^{N-3}}\right) C.
$$
Then for every $C \in \cdb^{sc}$ we have
\begin{align*}
\mu_{f(C)}\left(Q^{2^N}\right) &=  \tfrac{1}{2^{N-3}} \mu_{C^*}\left(Q^{2^N}\right) \, + \,
\left(1- \tfrac{1}{2^{N-3}}\right) \mu_C\left(Q^{2^N}\right) \\
&\geq \tfrac{1}{2^{N-3}} \mu_{C^*}\left(Q^{2^N}\right) = \tfrac{1}{2^{N-3}} \tfrac{1}{4} =
\tfrac{1}{2^{N-1}} \\
&> \tfrac{1}{2^N},
\end{align*}
implying $f(C) \in \cdb^{sc} \setminus \cconvh(\cdb^{assoc})$. In other words: $f$ maps
$\cdb^{sc}$ into $\cdb^{sc} \setminus \cconvh(\cdb^{assoc})$, and it remains to
show that $f$ is $\varepsilon$-close to the identity. Using the triangle inequality, however,
it follows immediately that
\begin{align*}
d_\infty(f(C),C) &\leq \tfrac{2}{2^{N-3}} =  \tfrac{1}{2^{N-4}} < \varepsilon
\end{align*}
for every $C \in \cdb^{sc}$, and the proof is complete.
\end{proof}
Having this, applying Theorem \ref{thm:main.tool} now directly yields the following
slightly surprising result without further ado (also see \cite{Liu2024a}):
\begin{thm}\label{thm:topo}
There exists a homeomorphism $h : [0,1] \times \mathcal{H} \to \cdb^{sc}$ fulfilling
\begin{align*}
h(\{0\}\times\mathcal{H}) &= \cconvh(\cdb^{assoc}).
\end{align*}
\end{thm}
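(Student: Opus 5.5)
This follows directly from Theorem~\ref{thm:main.tool}, applied in the Banach space $E=(C([0,1]^2),\Vert\cdot\Vert_\infty)$ with $S_1=\cconvh(\cdb^{assoc})$ and $S_2=\cdb^{sc}$. The plan is to check each hypothesis of that theorem in turn; every one is either elementary or supplied by Lemma~\ref{lem}. The conclusion is then exactly the required homeomorphism $h:[0,1]\times\mathcal{H}\to\cdb^{sc}$ with $h(\{0\}\times\mathcal{H})=\cconvh(\cdb^{assoc})$.

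\emph{Inclusion $S_1\subset S_2$.} Every associative copula is Schur concave, as recalled in the introduction. Hence $\cdb^{assoc}\subseteq\cdb^{sc}$, and since $\cdb^{sc}$ is closed and convex, $\cconvh(\cdb^{assoc})\subseteq\cdb^{sc}$.

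\emph{Convexity.} The closed convex hull is convex by definition. For $\cdb^{sc}$, inequality \eqref{eq:sc} is preserved under convex combinations, because both sides are linear in $C$.

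\emph{Compactness.} Both sets are closed subsets of the compact space $(\cdb,d_\infty)$. For $\cdb^{sc}$ this holds because \eqref{eq:sc} is a family of pointwise inequalities, and these pass to uniform limits. For $\cconvh(\cdb^{assoc})$ it holds because it is closed by construction.

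\emph{Hilbert cube and $Z$-set.} That $S_1$ is homeomorphic to $\mathcal{H}$, and that $S_1$ is a $Z$-set in $S_2$, is precisely the content of Lemma~\ref{lem}. The lemma is formulated for $d_\infty$, which coincides with the metric induced by $\Vert\cdot\Vert_\infty$ on $E$.

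There is no real obstacle here. The only point needing care is consistency of the ambient structure: the $Z$-set property and the Hilbert cube homeomorphism must be taken with respect to the norm topology of the same space $E$ in which convexity is understood. The substantive work, namely the mass bound \eqref{ineq:little.squares2} and the Keller--Dobrowolski--Toru\'nczyk argument, has already been done in Lemma~\ref{lem}, so the theorem is an immediate corollary.
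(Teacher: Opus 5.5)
Your proposal is correct and is exactly the paper's argument: Theorem~\ref{thm:main.tool} is applied in $(C([0,1]^2),\Vert\cdot\Vert_\infty)$ with $S_1=\cconvh(\cdb^{assoc})$ and $S_2=\cdb^{sc}$. Convexity and compactness are elementary, and the Hilbert-cube and $Z$-set hypotheses come from Lemma~\ref{lem}; your explicit check of the inclusion $\cconvh(\cdb^{assoc})\subseteq\cdb^{sc}$ (associative copulas are Schur concave, and $\cdb^{sc}$ is closed and convex) is a detail the paper leaves implicit.
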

\noindent \tru{In other words: if we identify \new{$\cdb^{sc}$ with the Hilbert cube $[0,1]\times\mathcal{H}$ via $h$}, then (geometrically speaking) $\cconvh(\cdb^{assoc})$ corresponds to \new{the face $\{0\}\times\mathcal{H}$}.}

\section{Staircases and chains}\label{sec:chains}

In this section we show that the mechanism behind Lemma~\ref{lem:2} yields a general necessary condition for membership in $\cconvh(\cdb^{assoc})$ (Theorem~\ref{thm:chains}), which provides a large class of Schur concave copulas outside $\cconvh(\cdb^{assoc})$ (Corollary~\ref{cor:outside}), and we compare this approach with the one in \cite{Sel} (Remark~\ref{rem:seliga} and Proposition~\ref{prop:flip}).

The proof of Lemma~\ref{lem:2} only uses that $Q^{2^N}$ is a `staircase' of rectangles below the main diagonal. In what follows, a \emph{staircase} is a set of the form
\begin{equation}\label{eq:staircase}
R(\mathbf{s},\mathbf{t}):=\bigcup_{i=1}^m[s_{i-1},s_i]\times[t_{i-1},t_i]
\end{equation}
with $m\in\mathbb{N}$ and $0\le t_0\le t_1\le\cdots\le t_m\le s_0<s_1<\cdots<s_m\le1$; we write $\Vert\mathbf{s}\Vert:=\max_{i\in[m]}(s_i-s_{i-1})$. 
\tru{Notice that analogous to the sets $Q^{2^N}$ considered
in the previous section, each staircase $R(\mathbf{s},\mathbf{t})$ is a $\mu_C$ continuity set
for each copula $C \in \cdb$, i.e., $\mu_C(\partial R(\mathbf{s},\mathbf{t}))=0$ holds.} 
Proceeding as in the proof of Lemma~\ref{lem:2} (replacing $u_i,v_i$ by $s_i,t_i$, the point $\frac12$ by $s_0$ and the length $\frac{1}{2^N}$ by $\Vert\mathbf{s}\Vert$, and using $t_m\le s_0$ instead of $v_{2^{N-1}}=u_0$) yields the following generalization.

\begin{prop}\label{prop:staircase}
Every $A\in\cdb^{assoc}$ fulfills $\mu_A\big(R(\mathbf{s},\mathbf{t})\big)\le\Vert\mathbf{s}\Vert$ for every staircase $R(\mathbf{s},\mathbf{t})$. The bound is sharp: if $m=1$ and $s_1-s_0=t_1-t_0$, equality holds for the ordinal sum with the single summand $W$ on $[t_0,s_1]$.
\end{prop}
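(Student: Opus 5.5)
The plan is to repeat the two steps of the proof of Lemma~\ref{lem:2}, replacing $u_i,v_i$ by $s_i,t_i$ throughout, and then to check sharpness by a direct computation.

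\textbf{Step 1: reduction to a single block.} Write $A$ as an ordinal sum according to eq.~\eqref{eq:analytic}. Outside the blocks $(a_i,b_i)^2$, the measure $\mu_A$ lives on the diagonal. Since $t_m\le s_0<s_1$, the staircase meets the diagonal in at most the single point $(s_0,s_0)$, which has $\mu_A$-measure zero because $\mu_A$ has uniform marginals.

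Every point $(x,y)$ of the staircase with $y<x$ can only lie in a block with $a_i\le y<x\le b_i$. Hence the staircase lies in the union of the (countably many) blocks it meets, up to a null set. If several blocks meet it, I would bound each block's contribution separately and sum them.

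This is where I expect the main obstacle. The paper's proof of Lemma~\ref{lem:2} uses that only one block can meet the staircase, since there $t_m=s_0=\tfrac12$. For a general staircase with $t_m<s_0$, several blocks may intersect it. I would handle this as follows:
\begin{enumerate}[label=(\alph*)]
\item A block $(a,b)^2$ meets only the squares $i$ with $[s_{i-1},s_i]\cap(a,b)\neq\emptyset$ and $[t_{i-1},t_i]\cap(a,b)\neq\emptyset$.
\item The single-block estimate gives a bound of $\Vert\mathbf{s}\Vert$ times the mass it telescopes. I would try to show that these telescoped quantities, after rescaling by $(b-a)$, add up to at most $1$ over all blocks.
\item Failing that, I would argue that the $x$-projections onto the $s$-axis of the relevant blocks are disjoint sub-intervals, so the sum of their contributions stays below $\Vert\mathbf{s}\Vert$.
\end{enumerate}
$M$-blocks contribute nothing, and Archimedean blocks are reduced to strict generators by $d_\infty$-density together with continuity of $\mu$ on finite unions of rectangles.

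\textbf{Step 2: the Archimedean estimate.} Rescale by $f(x)=(x-a)/(b-a)$, clipping $\hat s_i=\min\{f(s_i),1\}$ and $\hat t_i=\max\{f(t_i),0\}$. Disintegrate via the kernel \eqref{eq:kern.arch}. Because $|D^+\varphi|$ is non-increasing and $A_\varphi$ is increasing in both arguments, the following three facts hold:
\begin{itemize}
\item on the $i$-th interval, $|D^+\varphi(x)|\le|D^+\varphi(\hat s_0)|$;
\item the bracket is bounded by $1/|D^+\varphi|(A_\varphi(\hat s_i,\hat t_i))-1/|D^+\varphi|(A_\varphi(\hat s_{i-1},\hat t_{i-1}))$;
\item each interval has length at most $\Vert\mathbf{s}\Vert/(b-a)$.
\end{itemize}
The sum then telescopes to at most
\[
|D^+\varphi(\hat s_0)|\,\frac{\Vert\mathbf{s}\Vert}{b-a}\,\frac{1}{|D^+\varphi(\hat t_m)|}.
\]
Since $\hat t_m\le\hat s_0$ and $|D^+\varphi|$ is non-increasing, this is at most $\Vert\mathbf{s}\Vert/(b-a)$. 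Multiplying by $(b-a)$ gives the block bound $\Vert\mathbf{s}\Vert$.

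\textbf{Sharpness.} Take $m=1$ and $s_1-s_0=t_1-t_0=:\ell$, and let $A$ be the ordinal sum with the single summand $W$ on $[t_0,s_1]$. The mass of this summand lies on the antidiagonal $x+y=t_0+s_1$ of $[t_0,s_1]^2$. Over $x\in[s_0,s_1]$, this antidiagonal has $y=t_0+s_1-x\in[t_0,t_1]$, so the square $[s_0,s_1]\times[t_0,t_1]$ carries mass $\ell=\Vert\mathbf{s}\Vert$.
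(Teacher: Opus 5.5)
Your Step 2 and your sharpness computation are what the paper does. The paper's proof is just to rerun Lemma~\ref{lem:2} with $u_i,v_i,\tfrac12,\tfrac1{2^N}$ replaced by $s_i,t_i,s_0,\Vert\mathbf{s}\Vert$ and to use $t_m\le s_0$ at the end. The gap is in Step 1. The obstacle you name there (several blocks meeting the staircase) does not occur. But your proposal asserts that it does and leaves it unresolved, and neither fallback is an argument. Option (b) is only a hope. Option (c) does not follow: the bound from Step 2 is $\Vert\mathbf{s}\Vert$ for each block regardless of its width $b-a$. So summing over $k$ blocks with disjoint projections would only give $k\Vert\mathbf{s}\Vert$. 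As written, Step 1 therefore does not prove the proposition.

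The missing idea is that uniqueness of the block in Lemma~\ref{lem:2} uses only $t_m\le s_0$, not $t_m=s_0$. Suppose a block $(a,b)^2$ meets the open rectangle $(s_{i-1},s_i)\times(t_{i-1},t_i)$. Then $a<t_i$ and $b>s_{i-1}$, hence $a<t_i\le t_m\le s_0\le s_{i-1}<b$. In particular $s_0\in(a,b)$, and in fact $[t_m,s_0]\subseteq(a,b)$. Since the intervals $(a_i,b_i)$ are pairwise disjoint, at most one block meets the interior of the staircase. The staircase is a $\mu_A$-continuity set, so the reduction to a single block goes through exactly as in the paper. You almost had this: you noted that a point $(x,y)$ of the staircase with $y<x$ lies in a block with $a_i<y<x<b_i$, and combining this with $y\le t_m\le s_0\le x$ gives it at once. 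The observation also supplies two facts your Step 2 uses silently. First, $\hat s_0=f(s_0)\in(0,1)$, so $|D^+\varphi(\hat s_0)|<\infty$; you clip $\hat s_i$ only from above. Second, $\hat t_m\le\hat s_0$, which is needed in the last inequality.
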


Since $C\mapsto\mu_C\big(R(\mathbf{s},\mathbf{t})\big)$ is a finite linear combination of values of 
$C$ \tru{(in fact, a linear combination of $C$ evaluated at the 
vertices of $R(\mathbf{s},\mathbf{t})$), it is affine and continuous 
with respect to $d_\infty$,} so the inequality extends to $\cconvh(\cdb^{assoc})$. 
\tru{This has the following consequence \new{for} so-called `chains'.} Recall that $L\subseteq[0,1]^2$ is 
\tru{referred to as a} chain (with respect to the coordinatewise order) if $(x-x')(y-y')\ge0$ for all $(x,y),(x',y')\in L$; typical examples are line segments with positive slope and, more generally, graphs of non-decreasing functions. We write $\Delta:=\{(x,x):x\in[0,1]\}$ 
\tru{for the main diagonal of $[0,1]^2$. Figure \ref{fig:staircase_example} depicts
two examples of staircases (blue) and chains (magenta).} 

\begin{figure}[htbp]
  \centering
  \includegraphics[width=\textwidth]{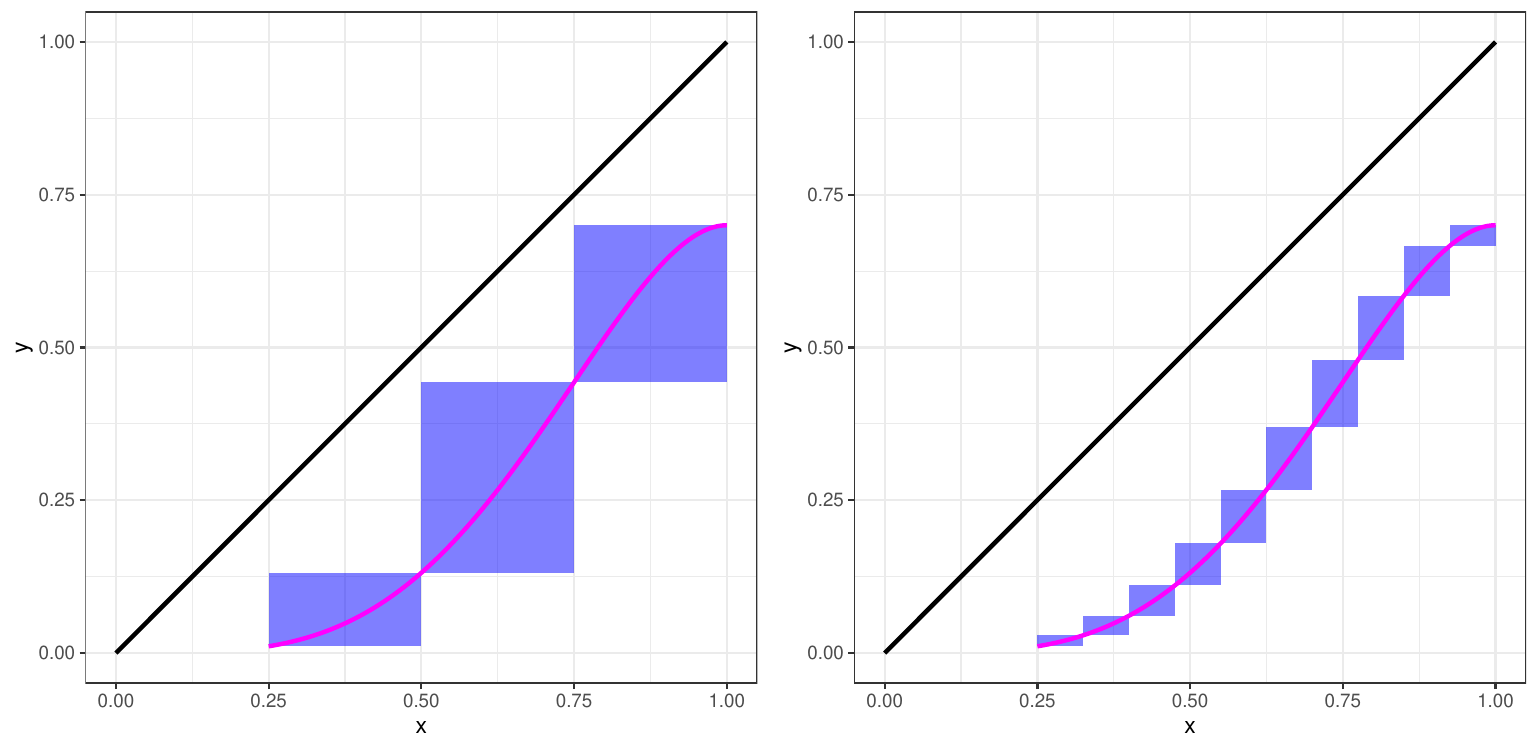}
  \caption{Two examples of staircases (blue) and chains (magenta). 
  Letting \new{$R_1$ and $R_2$} denote the left and the right staircase, respectively, according to Proposition \ref{prop:staircase}, every associative copula $A$ fulfills \new{$\mu_A(R_1)$} $\leq \frac{1}{4}$ as well as \new{$\mu_A(R_2)$} $\leq \frac{1}{40}$.}
  \label{fig:staircase_example}
\end{figure}

\begin{thm}\label{thm:chains}
\tru{For every $C\in\cconvh(\cdb^{assoc})$ and every staircase $R(\mathbf{s},\mathbf{t})$ we have  
$\mu_C\big(R(\mathbf{s},\mathbf{t})\big)\le\Vert\mathbf{s}\Vert$. Moreover, $\mu_C(L)=0$ for every 
chain $L \in \mathcal{B}([0,1]^2)$ fulfilling $L\subseteq[0,1]^2\setminus\Delta$.}
\end{thm}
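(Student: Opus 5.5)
The first assertion follows from Proposition~\ref{prop:staircase} together with the remark after it. For a fixed staircase $R(\mathbf{s},\mathbf{t})$, the map $C\mapsto\mu_C(R(\mathbf{s},\mathbf{t}))$ is a linear combination of values of $C$ at the vertices of the rectangles. Hence it is affine and $d_\infty$-continuous. The bound $\mu_A(R)\le\Vert\mathbf{s}\Vert$ for $A\in\cdb^{assoc}$ therefore passes to convex combinations, and then to uniform limits. So the real work lies in the chain statement, which I plan to deduce from the staircase bound by covering arguments.

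First I reduce to chains lying strictly below the diagonal. Since every $C\in\cconvh(\cdb^{assoc})$ is a uniform limit of symmetric copulas, it is symmetric, so $\mu_C$ is invariant under the reflection $(x,y)\mapsto(y,x)$. This reflection maps chains to chains. Writing $L=L^-\cup L^+$ with $L^-:=L\cap\{y<x\}$ and $L^+:=L\cap\{y>x\}$, it suffices to treat $L^-$ (the reflection of $L^+$ is a chain below $\Delta$). Next I exhaust $L^-$ by the countably many pieces $L_k:=L^-\cap\{x-y\ge 1/k\}$, so by $\sigma$-additivity it suffices to show $\mu_C(L_k)=0$ for fixed $k$. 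I localize once more: cover $[0,1]$ by intervals $J_j$ of length less than $1/k$, and set $L_{k,j}:=L_k\cap(J_j\times[0,1])$. For $(x,y)\in L_{k,j}$ we have $y\le x-1/k<\inf J_j$, so $L_{k,j}\subseteq[s_0,1]\times[0,s_0]$ with $s_0:=\inf J_j$. Thus each piece is a chain contained in the region where staircases live, i.e. where all $y$-values lie below all $x$-values.

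The core step is to cover a chain $L'\subseteq[s_0,s_0+\ell]\times[0,s_0]$ by staircases with arbitrarily small mesh. Fix $n$ and split $[s_0,s_0+\ell]$ into $n$ equal intervals $[s_{i-1},s_i]$. For each $i$, define $t_i$ as the supremum of the $y$-coordinates of points of $L'$ with $x\le s_i$, made monotone (and with $t_0$ the infimum of the $y$-values, or $0$). Because $L'$ is a chain, every point of $L'$ with $x\in[s_{i-1},s_i]$ has $y\in[t_{i-1}',t_i]$. Here one has to be careful with points having the same $x$ but different $y$, which a chain permits as vertical segments; this is handled by letting $t_{i-1}$ be the sup of $y$ over points with $x<s_{i-1}$, or over points with $x\le s_{i-1}$, so that boundary points are covered. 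The chain property guarantees that the $y$-values over $[s_{i-1},s_i]$ lie between those of the neighbouring columns. The resulting nondecreasing $t_0\le\dots\le t_n\le s_0$ give a staircase $R(\mathbf{s},\mathbf{t})\supseteq L'$ with $\Vert\mathbf{s}\Vert=\ell/n$. Theorem's first part then yields $\mu_C(L')\le\ell/n\to0$.

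I expect the main obstacle to be exactly this covering: verifying that the rectangle columns really capture all of $L'$, including points on the column boundaries $x=s_i$. I would handle this by allowing a slight overlap, namely using the sup over $x\le s_i$ for the top and the inf over $x\ge s_{i-1}$ for the bottom, and then checking that the monotonicity $t_{i-1}\le t_i$ still holds, which follows from the chain property. Measurability is unproblematic because $L$ is assumed Borel.
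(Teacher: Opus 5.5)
Your proposal is correct and follows essentially the paper's route: a symmetry reduction to chains below $\Delta$, a countable decomposition into pieces lying in regions of the form $[s_0,1]\times[0,s_0]$ (the paper simply uses $L_q:=L\cap([q,1]\times[0,q])$, $q\in\mathbb{Q}\cap(0,1)$, in place of your pieces $L_{k,j}$), and the chain property to cover each piece by staircases whose mesh tends to $0$. The paper disposes of your boundary worry by noting that the finitely many vertical lines $\{s_i\}\times[0,1]$ are $\mu_C$-null. Of your two fixes, the choice $t_0:=0$, $t_i:=\sup\{y:(x,y)\in L',\,x\le s_i\}$ works as is: a point with $x=s_{i-1}$ is captured by column $i-1$, or by the first column if $i=1$. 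Taking bottoms as infima over $x\ge s_{i-1}$ does not in general produce a staircase, because the bottom of column $i$ must coincide with the top $t_{i-1}$ of column $i-1$.
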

\begin{proof}
The first assertion has already been shown. Since $C$ is symmetric, it suffices to 
\tru{establish the result for chains $L\subseteq\{(x,y)\in [0,1]^2:\, x>y\}$. 
Considering that every such $L$ is the 
 union of the sets $L_q:=L\cap([q,1]\times[0,q])$, $q\in\mathbb{Q}\cap(0,1)$,} it suffices to show $\mu_C(L_q)=0$, \tru{which can be done as follows}: 
 Fix $m\in\mathbb{N}$, set $s_i:=q+\frac{i}{m}(1-q)$, \tru{define $B_i$ by 
$$
B_i:=\{y \in [0,1]: \exists x \in (s_{i-1},s_i) \text{ with } (x,y) \in L_q \}, 
$$ 
and} set $t_0:=0$ and $t_i:=\max\{t_{i-1},\sup B_i\}$ (with $\sup\emptyset:=0$). The chain property implies $\sup B_j\le\inf B_i$ for $j<i$, hence $B_i\subseteq[t_{i-1},t_i]$, so $L_q$ is contained in the staircase $R(\mathbf{s},\mathbf{t})$ up to finitely many vertical segments, which are $\mu_C$-null sets.
\tru{This shows $\mu_C(L_q)\le\frac{1-q}{m}$ for every $m\in\mathbb{N}$, which, considering 
$m \rightarrow \infty$}, completes the proof.
\end{proof}

The following simple observation shows that half of the mass of a Schur concave copula can be distributed in an essentially arbitrary way.

\begin{lem}\label{lem:halfM}
Let $B$ be \tru{an arbitrary symmetric} copula.  \tru{Then for every $\alpha\in[0,\frac12]$} 
the copula $C:=\alpha B+(1-\alpha)M$ is Schur concave. The constant $\frac12$ is best possible.
\end{lem}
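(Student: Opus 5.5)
The plan is to verify Schur concavity via the characterization in terms of $\delta_b$. Since $B$ and $M$ are symmetric, $C$ is symmetric, and by symmetry it suffices to consider $t\ge 0$. We must show that $\delta_b(t)=C(b+t,b-t)$ is non-increasing on $[0,\min(1-b,b)]$ for every $b\in[0,1]$.

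For the computation, write $x=b+t$ and $y=b-t$, so that $y\le x$. Then
\[
\delta_b(t)=\alpha B(b+t,b-t)+(1-\alpha)(b-t).
\]
The $M$-part therefore decreases with slope exactly $-(1-\alpha)$. For the $B$-part, take $0\le t<t'$ and set $h=t'-t$. Since copulas are $1$-Lipschitz in each coordinate and non-decreasing, increasing the first coordinate by $h$ raises $B$ by at most $h$, while decreasing the second coordinate by $h$ can only lower $B$. Hence
\[
B(b+t',b-t')-B(b+t,b-t)\le h.
\]
Combining the two parts,
\[
\delta_b(t')-\delta_b(t)\le \alpha h-(1-\alpha)h=(2\alpha-1)h\le 0
\]
whenever $\alpha\le\tfrac12$. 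This proves Schur concavity. Equivalently, one could argue through the kernel criterion \eqref{eq:sc.kernel}, since $K_M(x,[0,y])-K_M(y,[0,x])=-1$ for $x>y$ and $|K_B(x,[0,y])-K_B(y,[0,x])|\le 1$; the direct Lipschitz argument avoids almost-everywhere issues, so it is preferable.

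The main obstacle is optimality of $\tfrac12$: for every $\alpha>\tfrac12$ we need a symmetric $B$ making $C$ fail to be Schur concave. The Lipschitz bound is attained when the $B$-part increases at full slope $1$ along the anti-diagonal direction. That happens when, locally, $B(x,y)$ grows like $x$ with no dependence on $y$, i.e.\ when $B$ puts its mass on the anti-diagonal, as $W$ does. So I would take $B=W$, which is symmetric. For $x+y\ge1$ we have $W(x,y)=x+y-1$, hence on $b>\tfrac12$ the term $W(b+t,b-t)=2b-1$ is constant in $t$, and this only gives slope $0$, which is not enough. I would therefore switch to a completely dependent symmetric copula that locally translates mass, e.g.\ the shuffle $C_{T_a}$ of Example~\ref{ex:Ca}. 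Near a point $(x_0,y_0)$ with $x_0>y_0$ where its support is a segment of slope $1$ lying below $(x_0,y_0)$, we get $B(x,y)=B(x_0,y_0)+(x-x_0)$ locally, and with $x_0-y_0$ not equal to the shift the slope of $\delta_b$ is $\alpha\cdot 1-(1-\alpha)=2\alpha-1>0$.

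Concretely, with $B=C_{T_{1/2}}$ the support consists of the segments $y=x-\tfrac12$ for $x>\tfrac12$ and $y=x+\tfrac12$ for $x<\tfrac12$. Choose $b$ slightly above $\tfrac12$ and small $t>0$, so that $y=b-t$ is near $\tfrac12$ and $x=b+t>\tfrac12$. Then $\partial_x B=K_B(x,[0,y])=1$, because $x-\tfrac12<y$, and $\partial_y B=K_B(y,[0,x])=0$, because $y+\tfrac12>x$ when $y<\tfrac12$, or by the analogous condition for $y$ slightly above $\tfrac12$. Thus $\delta_b'(t)=\alpha(1-0)-(1-\alpha)=2\alpha-1>0$, contradicting Schur concavity. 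The remaining work is just checking this local computation carefully.
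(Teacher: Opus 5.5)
Your Schur-concavity argument is the paper's own. The paper also bounds $B(b+t,b-t)-B(b+t',b-t')\ge-\mu_B\big((b+t,b+t']\times[0,1]\big)=-(t'-t)$ and uses that the $M$-part drops with slope exactly $1$, which gives $\delta_b(t)-\delta_b(t')\ge(1-2\alpha)(t'-t)\ge0$. For optimality you also pick the paper's copula $B=C_{T_{1/2}}$. However, your local computation contains a false claim and, as stated, does not pin down a range of $t$ where the violation actually occurs.

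The false claim concerns the case where $y=b-t$ lies slightly above $\frac12$. This happens for $b$ slightly above $\frac12$ and $0<t<b-\frac12$. There $T_{1/2}(y)=y-\frac12<x$, so $K_B(y,[0,x])=1$, not $0$. Indeed, on that range $B(b+t,b-t)=2b-1$ is constant in $t$, so $\delta_b$ decreases with slope $-(1-\alpha)$, which is consistent with Schur concavity. The violation only happens in the region $x>\frac12$, $y<\frac12$, $x-y\le\frac12$. In that region you can compute $B$ directly: the $s\le x$ with $T_{1/2}(s)\le y$ are, up to a null set, exactly $s\in(\frac12,x]$, so $B(x,y)=x-\frac12$. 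You therefore need $b-\frac12<t\le\min(\frac14,1-b)$, which is a nonempty interval for $b\in[\frac12,\frac34)$. On it $\delta_b(t)=\alpha(b+t-\frac12)+(1-\alpha)(b-t)$ has slope $2\alpha-1>0$. The simplest choice is $b=\frac12$, as in the paper: $\delta_{1/2}(t)=\alpha t+(1-\alpha)(\frac12-t)$ on $[0,\frac14]$, which is strictly increasing for $\alpha>\frac12$. Computing $B$ explicitly this way also avoids relying on the almost-everywhere identification $\partial_xB=K_B(x,[0,y])$ that your derivative-based argument uses.
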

\begin{proof}
Obviously $C$ is symmetric. For $b\in[0,1]$ and $0\le t<t'\le\min(b,1-b)$ we have 
$$
B(b+t,b-t)-B(b+t',b-t')\ge-\mu_B\big((b+t,b+t']\times[0,1]\big)=-(t'-t)
$$
\tru{as well as} $M(b+t,b-t)-M(b+t',b-t')=t'-t$. \tru{This implies  
$\delta_b(t)-\delta_b(t')\ge(1-2\alpha)(t'-t)\ge0$, so $\delta_b$ is non-increasing. 
Since $b$ was arbitrary, this shows that \new{$C$} is Schur concave.} 
Notice that symmetry of $B$ has not been used here, so even for non-symmetric $B$ all functions 
$\delta_b$ have the required monotonicity although $C$ is not symmetric. 
For $B=C_{T_{1/2}}$ and $\alpha>\frac12$ we get $\delta_{1/2}(t)=\alpha t+(1-\alpha)(\frac12-t)$ for 
$t\in[0,\frac14]$, \tru{so on this interval $\delta_{1/2}$ is strictly increasing, and 
$C$ is not Schur concave.}
\end{proof}

\begin{cor}\label{cor:outside}
Let $B$ be a symmetric copula with \new{$\mu_B(L)>0$} for some \tru{Borel-measurable} chain $L\subseteq[0,1]^2\setminus\Delta$. Then $\alpha B+(1-\alpha)M\in\cdb^{sc}\setminus\cconvh(\cdb^{assoc})$ for every $\alpha\in(0,\frac12]$.
\end{cor}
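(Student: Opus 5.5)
The corollary combines Lemma~\ref{lem:halfM} with the chain part of Theorem~\ref{thm:chains}. Fix $\alpha\in(0,\frac12]$ and set $C:=\alpha B+(1-\alpha)M$.

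First, $C\in\cdb^{sc}$. Since $B$ is symmetric and $\alpha\le\frac12$, this is exactly the content of Lemma~\ref{lem:halfM}, so nothing further is needed here.

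Second, $C\notin\cconvh(\cdb^{assoc})$. Let $L$ be the given Borel chain with $L\subseteq[0,1]^2\setminus\Delta$ and $\mu_B(L)>0$. The measure $\mu_M$ is concentrated on the diagonal $\Delta$, and $L\cap\Delta=\emptyset$, so $\mu_M(L)=0$. Since $C\mapsto\mu_C$ is affine,
$$
\mu_C(L)=\alpha\,\mu_B(L)+(1-\alpha)\,\mu_M(L)=\alpha\,\mu_B(L)>0 .
$$
On the other hand, Theorem~\ref{thm:chains} says every element of $\cconvh(\cdb^{assoc})$ assigns mass $0$ to every Borel chain contained in $[0,1]^2\setminus\Delta$. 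This contradicts $\mu_C(L)>0$, so $C\notin\cconvh(\cdb^{assoc})$.

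There is no real obstacle. The only points to check are that $\mu_M(\Delta)=1$ (immediate, since $M$ is the completely dependent copula with $h=\id$) and that the hypotheses match those of Theorem~\ref{thm:chains}: $L$ is Borel and avoids $\Delta$. Note that $\alpha>0$ is needed for the mass to be positive, and $\alpha\le\frac12$ is needed for Schur concavity.
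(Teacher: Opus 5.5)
Your proposal is correct and follows the paper's proof exactly: Schur concavity comes from Lemma~\ref{lem:halfM}, and exclusion from $\cconvh(\cdb^{assoc})$ comes from the chain part of Theorem~\ref{thm:chains} together with $\mu_{\alpha B+(1-\alpha)M}(L)\ge\alpha\,\mu_B(L)>0$. Your extra observation that $\mu_M(L)=0$ upgrades this to an equality, whereas the paper only needs the inequality.
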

\begin{proof}
Direct consequence of Lemma~\ref{lem:halfM} and Theorem~\ref{thm:chains}, \tru{since in this case we have} 
\new{$\mu_{\alpha B+(1-\alpha)M}(L)\ge\alpha\,\mu_B(L)>0$}.
\end{proof}

The copulas $C_a$ from Example~\ref{ex:Ca} are of this form, since $\mu_{C_{T_a}}$ assigns mass $1-a$ to the segment $\{(x,x-a):x\in(a,1]\}$, which is a chain not intersecting $\Delta$. Further examples are obtained by choosing for $B$ any symmetric shuffle of $M$ whose support contains a segment of 
slope $1$ off the diagonal.

\tru{Our} approach also yields a quantitative statement \tru{without additional effort}: 
writing $\Phi(C):=\mu_C(Q^{8})$ as a linear combination of values of $C$ (nine values with coefficients $\pm1,\pm2$ whose absolute values sum up to $12$, plus the constant $\frac18$) shows $\vert\Phi(C)-\Phi(D)\vert\le12\,d_\infty(C,D)$. Since $\Phi(C^*)=\frac14$ and $\Phi\le\frac18$ on $\cconvh(\cdb^{assoc})$, we obtain $d_\infty\big(C^*,\cconvh(\cdb^{assoc})\big)\ge\frac{1}{96}$. For the shuffles of $M$ 
\tru{as considered} in \cite{Sel}, the exact distance to $\cconvh(\cdb^{assoc})$ has been determined in \cite[Theorem~5.5]{Sel}.

\begin{rem}\label{rem:seliga}
The chain \tru{property} in Theorem~\ref{thm:chains} \tru{does not exclude} a Schur concave completely dependent copula $C_h$ (such as the shuffles of $M$ in \cite{Sel}) from $\cconvh(\cdb^{assoc})$, since such $C_h$ assigns zero mass to every chain $L\subseteq[0,1]^2\setminus\Delta$. 
\tru{In fact, suppose that $C_h$ is a \new{Schur concave (hence symmetric)} completely dependent copula with $\mu_{C_h}(L)>0$} for some Borel measurable chain $L\subseteq\{(x,y):x>y\}$. 
\tru{Symmetry of $C_h$ implies that $h\circ h=\id$ holds $\lambda$-a.e., the set 
$E:=\{x \in [0,1]:(x,h(x))\in L\}$ has positive \tru{Lebesgue} measure 
(since $\mu_{C_h}$ assigns all mass to the graph of $h$),
} and $h$ is non-decreasing on $E$ with $h(x)<x$. 
Considering \tru{that $\lambda$-almost every $x \in E$ is a Lebesgue point of 
$\mathbf{1}_E$ (see \cite{Ru}), using the fact that $h|_E$ has at most countably many discontinuities, and applying disintegration to the set $S:=\{(x,x')\in E^2:x<x',\,h(x')<x\}$ it follows that
\begin{align*}
\lambda_2(S)&= \int_E \lambda(S_x) d\lambda(x) = \int_{E \cap \text{Cont(h,E)}} \lambda(S_x) d\lambda(x)
>0, 
\end{align*}
where $\text{Cont(h,E)}$ denotes the continuity points of $h$ restricted to $E$. 
For $\lambda_2$-a.e. $(x,x')\in S$ the point $(u,v):=(x,h(x'))$ fulfills $u>v$, $K_{C_h}(u,[0,v])=\mathbf{1}_{[0,h(x')]}(h(x))=1$ as well as \new{$K_{C_h}(v,[0,u])=\mathbf{1}_{[0,u]}(h(v))=\mathbf{1}_{[0,x]}(x')=0$}. 
Since the mapping $(x,x')\mapsto(x,h(x'))$ is $\lambda_2$-preserving and $\lambda_2$-a.e.\ injective, 
this contradicts \eqref{eq:sc.kernel}.} Conversely, \tru{none of the} copulas in Corollary~\ref{cor:outside} are  completely dependent. \\
Hence the chain \tru{property} in Theorem~\ref{thm:chains} cannot recover the counterexamples in \cite{Sel}, and the two families of counterexamples are disjoint. Schur concave shuffles of $M$ are, however, covered by the extremality argument in \cite{Sel}: an extreme point of $\cdb$ which belongs to $\cconvh(\cdb^{assoc})$ is an extreme point of $\cconvh(\cdb^{assoc})$ and therefore associative by \cite[Theorem~3.4]{Sel}. Conversely, the copula $C^*$ from Section~\ref{sec:main} is not an extreme point of $\cdb$ and fulfills the star-shapedness inequality $C^*(t,1-t)\ge2t\,C^*(\frac12,\frac12)$, $t\in[0,\frac12]$, of \cite[Lemma~5.3]{Sel}, so neither the extremality argument nor the separating functionals used in \cite{Sel} exclude $C^*$ from $\cconvh(\cdb^{assoc})$.
\end{rem}
\tru{In the subsequent proposition $C_\theta^*$ with $\theta \in (0,\frac{1}{2})$ 
denotes the completely dependent copula 
$C_{h_\theta}$, whose corresponding $\lambda$-preserving transformation $h_\theta$ is given by
$$
h_\theta(x) =
\begin{cases}
  1-x, & x\in[0,\theta]\cup[1-\theta,1], \\
  x,  & \text{otherwise}.
\end{cases}
$$
According to \new{\cite[Theorems~4.7 and 5.1]{Sel}} $C_\theta^*$ is Schur concave but fulfills $C_\theta^* \notin\cconvh(\cdb^{assoc})$ \new{(the copulas $C^*_\theta$ should not be confused with the copula $C^*=C_{\frac12}$ from Section~\ref{sec:main})}. 
}
\begin{prop}\label{prop:flip}
\tru{For every $\theta\in(0,\frac12)$ the copula $C_\theta^*$ fulfills both conditions in Theorem~\ref{thm:chains}. 
Consequently, the necessary conditions in Theorem~\ref{thm:chains} are not sufficient for membership in 
$\cconvh(\cdb^{assoc})$.}
\end{prop}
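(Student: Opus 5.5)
The plan is to verify the two conditions of Theorem~\ref{thm:chains} separately, using the explicit support of $\mu_{C_\theta^*}$. Since $C_\theta^*=C_{h_\theta}$ is completely dependent, $\mu_{C_\theta^*}$ is the push-forward of $\lambda$ under $x\mapsto(x,h_\theta(x))$. Its support therefore consists of three pieces: the diagonal piece $\{(x,x):x\in[\theta,1-\theta]\}$, the anti-diagonal piece $\Gamma_1:=\{(x,1-x):x\in[0,\theta]\}$, and the anti-diagonal piece $\Gamma_2:=\{(x,1-x):x\in[1-\theta,1]\}$. For every Borel set $G$ we then have
$$
\mu_{C_\theta^*}(G)=\lambda\bigl(\{x:(x,h_\theta(x))\in G\}\bigr).
$$

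\emph{Chain condition.} One could simply invoke Remark~\ref{rem:seliga}, since $C_\theta^*$ is Schur concave by \cite{Sel} and completely dependent, so it assigns zero mass to every chain $L\subseteq[0,1]^2\setminus\Delta$. A direct argument is also short. Such an $L$ meets the diagonal piece nowhere. A chain meets the graph of the strictly decreasing map $x\mapsto 1-x$ in at most one point: if $(x,1-x)$ and $(x',1-x')$ both lie in $L$, then $(x-x')(x'-x)\ge0$, which forces $x=x'$. Hence $\mu_{C_\theta^*}(L)\le\lambda(\{\text{one point}\})=0$.

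\emph{Staircase condition.} Let $R=R(\mathbf{s},\mathbf{t})$ be a staircase with rectangles $R_i:=[s_{i-1},s_i]\times[t_{i-1},t_i]$.
\begin{enumerate}
\item Since $y\le t_m\le s_0\le x$ on $R$, the staircase lies in $\{x\ge y\}$. Thus $R$ meets $\Gamma_1\setminus\{(\frac12,\frac12)\}$ nowhere, and it meets $\Delta$ in at most the single point $(s_0,s_0)$. These intersections carry no mass, so $\mu_{C_\theta^*}(R)=\lambda(P)$ with $P:=\{x\in[1-\theta,1]:(x,1-x)\in R\}$.
\item Write $P=\bigcup_i P_i$ with $P_i:=\{x:(x,1-x)\in R_i\}\subseteq[s_{i-1},s_i]$, so that $\lambda(P_i)\le s_i-s_{i-1}\le\Vert\mathbf{s}\Vert$.
\item The key step is that at most one $P_i$ can have positive measure. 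Take $i<j$, $x\in P_i$ and $x'\in P_j$. Then $x\le s_i\le s_{j-1}\le x'$. Also $1-x\le t_i\le t_{j-1}\le 1-x'$, which gives $x\ge x'$. Hence $x=x'$.
\item Consequently, if two distinct $P_i,P_j$ are nonempty, then all of them are contained in a common single point. Either way $\lambda(P)\le\max_i\lambda(P_i)\le\Vert\mathbf{s}\Vert$.
\end{enumerate}
Geometrically, the increasing staircase and the decreasing segment $\Gamma_2$ can essentially overlap in only one step.

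Combining both conditions with the fact from \cite[Theorems~4.7 and 5.1]{Sel} that $C_\theta^*\in\cdb^{sc}\setminus\cconvh(\cdb^{assoc})$ yields the non-sufficiency claim. I expect no real obstacle. The only care needed is the bookkeeping of boundary points and degenerate intersections in the staircase step, which are all $\lambda$-null.
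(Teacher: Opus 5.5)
Your proposal is correct and follows essentially the same route as the paper. Off the diagonal, the mass sits on the two decreasing segments; a chain meets the graph of $x\mapsto 1-x$ in at most one point; and a staircase can charge $\Gamma_2$ within only a single rectangle, up to a null set. Your step~3 makes explicit the paper's unproved ``single rectangle'' claim, and it also correctly notes that $\Gamma_1$ plays no role because $R(\mathbf{s},\mathbf{t})\subseteq\{x\ge y\}$.
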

\begin{proof}
Off the main diagonal, \tru{$\mu_{C_\theta^*}$ assigns all mass to} the two decreasing segments 
$$
S_1:=\{(x,1-x):x\in[0,\theta]\},\quad S_2:= \{(x,1-x):x\in[1-\theta,1]\}.
$$
\tru{Every chain intersects $S_1$ and $S_2$ in at most one point. Moreover, any staircase 
$R(\mathbf{s},\mathbf{t})$ meets the diagonal in at most one point and, apart from finitely many points, 
 intersects $S_1$ and \new{$S_2$} within a single rectangle $[s_{i-1},s_i]\times[t_{i-1},t_i]$, whose intersection with the segments carries mass of at most $s_i-s_{i-1}$.} Hence both conditions in Theorem~\ref{thm:chains} hold.
\end{proof}

\section{Further topological results}\label{sec:topo2}
\tru{This section complements Theorem~\ref{thm:topo} in two directions:} Firstly, we derive some consequences for the family $\cdb^{sc}\setminus\cconvh(\cdb^{assoc})$ of all Schur concave copulas which are not contained in the closed convex hull of $\cdb^{assoc}$ (Corollary~\ref{cor:topo}). 
Our results considerably strengthen the observation in \cite[Remark~5.2]{Sel} stating 
that this family contains the interior of a topo\-logi\-cal arc. Secondly, we show that $\cdb^{assoc}$ is, in turn, topologically negligible in $\cconvh(\cdb^{assoc})$ (Theorem~\ref{thm:assocZ}). 
\tru{Together} with the fact that all extreme points of $\cconvh(\cdb^{assoc})$ are associative \cite[Theorem~3.4]{Sel}, this yields information on the set of extreme points of $\cconvh(\cdb^{assoc})$ (Corollary~\ref{cor:assocZ}).

\tru{In what follows we will refer to the elements of a subset $A$ of a metric space $(S,m)$
as `topologically typical' (in the sense of Baire categories), if and only if $A$ is 
co-meager, i.e., if $A$ is the complement of a set of first Baire category.} 

\begin{cor}\label{cor:topo}
\tru{The following assertions hold:}
\begin{enumerate}[label=(\roman*)]
\item $\cdb^{sc}\setminus\cconvh(\cdb^{assoc})$ is open and dense in $(\cdb^{sc},d_\infty)$; in particular, in the sense of Baire categories a typical Schur concave copula is not an element of $\cconvh(\cdb^{assoc})$.
\item $\cdb^{sc}\setminus\cconvh(\cdb^{assoc})$ is homeomorphic to $(0,1]\times\mathcal{H}$.
\item For every metric space \tru{$(S,m)$}, every continuous mapping 
$\Lambda: S \to\cdb^{sc}$ and 
every $\varepsilon>0$ there exists a continuous mapping $\tilde\Lambda: S \to\cdb^{sc}\setminus\cconvh(\cdb^{assoc})$ with $d_\infty(\Lambda(x),\tilde\Lambda(x))<\varepsilon$ for all $x\in S$.
\tru{If} $\Lambda$ is a topological embedding, then so is $\tilde\Lambda$.
\end{enumerate}
\end{cor}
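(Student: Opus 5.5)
All three assertions will be derived from Lemma~\ref{lem} and Theorem~\ref{thm:topo}, together with the explicit map $f$ constructed in the proof of Lemma~\ref{lem}.

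For (i), openness is immediate. $\cconvh(\cdb^{assoc})$ is closed in the compact space $(\cdb^{sc},d_\infty)$, so its complement is open. For density, I would use the map $f(C)=\tfrac{1}{2^{N-3}}C^*+(1-\tfrac{1}{2^{N-3}})C$ from the proof of Lemma~\ref{lem}. It sends every $C\in\cdb^{sc}$ into $\cdb^{sc}\setminus\cconvh(\cdb^{assoc})$ and satisfies $d_\infty(f(C),C)<\varepsilon$. Every $Z$-set is nowhere dense, so the complement of $\cconvh(\cdb^{assoc})$ is open and dense. A closed nowhere dense set is of first category, which gives the typicality statement.

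For (ii), I would transport the problem via the homeomorphism $h$ of Theorem~\ref{thm:topo}. Restricting $h$ to $[0,1]\times\mathcal{H}\setminus(\{0\}\times\mathcal{H})=(0,1]\times\mathcal{H}$ gives a homeomorphism onto $\cdb^{sc}\setminus h(\{0\}\times\mathcal{H})=\cdb^{sc}\setminus\cconvh(\cdb^{assoc})$. This works because $h$ is a bijection and the restriction of a homeomorphism to a subset is a homeomorphism onto its image.

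For (iii), I would set $\tilde\Lambda:=f\circ\Lambda$ with $N$ chosen so that $\tfrac{1}{2^{N-4}}<\varepsilon$. This map is continuous, takes values outside $\cconvh(\cdb^{assoc})$, and is uniformly $\varepsilon$-close to $\Lambda$. The embedding claim is the only real point, and I expect it to be the main obstacle, since it needs $f$ itself to be an embedding of $\cdb^{sc}$ into itself. It is: $f$ is an affine map $C\mapsto c C^*+(1-c)C$ with $c=\tfrac{1}{2^{N-3}}<1$, and it satisfies
\[
d_\infty(f(C),f(D))=(1-c)\,d_\infty(C,D).
\]
Hence $f$ is injective and bi-Lipschitz onto its image, so it is a homeomorphism onto its image. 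It follows that if $\Lambda$ is an embedding, the composition $f\circ\Lambda$ is again an embedding. This sidesteps any general $Z$-set embedding theory; one only checks that the target, namely $\cdb^{sc}$ minus the closed convex hull, is indeed hit.
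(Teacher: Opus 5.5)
Your proposal is correct and follows the paper's proof essentially step by step: (i) comes from the $Z$-set property in Lemma~\ref{lem}, (ii) from restricting the homeomorphism $h$ of Theorem~\ref{thm:topo} to $(0,1]\times\mathcal{H}$, and (iii) from $\tilde\Lambda=f\circ\Lambda$. The only minor difference is in the embedding claim: you use the identity $d_\infty(f(C),f(D))=(1-c)\,d_\infty(C,D)$, whereas the paper argues that the affine injective map $f$ on the compact set $\cdb^{sc}$ is a homeomorphism onto its range; both work, provided you keep $N\geq 7$ as in Lemma~\ref{lem} so that $c<1$.
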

\begin{proof}
\tru{Assertion} (i) follows from Lemma~\ref{lem}, since $Z$-sets are closed and nowhere dense\new{.} \tru{Assertion} (ii) holds since the homeomorphism $h$ from Theorem~\ref{thm:topo} maps $(0,1]\times\mathcal{H}$ onto $\cdb^{sc}\setminus\cconvh(\cdb^{assoc})$. 
\tru{To show the third \new{statement}, consider} $\tilde\Lambda:=f\circ\Lambda$ with $f$ as in the proof of Lemma~\ref{lem}; $f$ is affine and injective on the compact set $\cdb^{sc}$, and 
hence a homeomorphism onto its range.
\end{proof}
Statement (iii) means that every parametric family of Schur concave copulas can be perturbed by an arbitrarily small amount (uniformly in the parameter and preserving injectivity) to a family which completely avoids $\cconvh(\cdb^{assoc})$. \\

We now show that $\cdb^{assoc}$ \tru{is topologically negligible} in its closed convex hull. 
\tru{The following lemma will be key.} 

\begin{lem}\label{lem:diag}
Every $A\in\cdb^{assoc}$ fulfills $\mu_A(\Delta)=\lambda\big(\{x\in[0,1]:A(x,x)=x\}\big)$.
\end{lem}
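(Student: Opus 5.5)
We will prove the lemma by decomposing $[0,1]$ according to the ordinal sum representation (O1)--(O3) of $A$. Write $\mathrm{Id}_A:=\{x\in[0,1]: A(x,x)=x\}$ for the set of idempotents. The plan is to show two things: that on each Archimedean block $\mu_A$ gives no mass to $\Delta$ and contains no idempotents in the open interval, and that on the complement of the Archimedean blocks the mass lies entirely on $\Delta$ and every point is an idempotent.

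First we identify the idempotents. Let $x\in(a_i,b_i)$ for a block with $A_i=A_\varphi\in\cdb^{ar}$. Then \eqref{eq:analytic} gives
\[
A(x,x)=a_i+(b_i-a_i)A_\varphi(z,z), \qquad z:=\tfrac{x-a_i}{b_i-a_i}\in(0,1).
\]
Since $\varphi$ is strictly decreasing and $\varphi(z)>0$, we have $A_\varphi(z,z)=\psi(2\varphi(z))<z$. Hence $x$ is not idempotent. Every other point is idempotent. Points in blocks carrying $M$ satisfy $A(x,x)=x$ directly from \eqref{eq:analytic}. Points outside all open blocks fall into the ``otherwise'' case of \eqref{eq:analytic}, so again $A(x,x)=M(x,x)=x$. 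Writing $U:=\bigcup_{i:A_i\in\cdb^{ar}}(a_i,b_i)$, we therefore get $\mathrm{Id}_A=[0,1]\setminus U$, and thus $\lambda(\mathrm{Id}_A)=1-\sum_{i:A_i\in\cdb^{ar}}(b_i-a_i)$.

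Second we identify the diagonal mass, using disintegration \eqref{eq:disint}: $\mu_A(\Delta)=\int_{[0,1]}K_A(x,\{x\})\,d\lambda(x)$. On an Archimedean block, $\mu_A$ restricted to $(a_i,b_i)^2$ is the scaled push-forward of $\mu_{A_\varphi}$. Thus the diagonal mass there equals $(b_i-a_i)\,\mu_{A_\varphi}(\Delta)$, and $\mu_{A_\varphi}(\Delta)=\int K_{A_\varphi}(z,\{z\})\,d\lambda(z)$. By \eqref{eq:kern.arch}, the function $y\mapsto K_{A_\varphi}(z,[0,y])=D^+\varphi(z)/D^+\varphi(A_\varphi(z,y))$ can jump at $y=z$ only if $D^+\varphi$ jumps at $A_\varphi(z,z)$. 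The function $D^+\varphi$ is monotone and has at most countably many discontinuities. Moreover, $z\mapsto A_\varphi(z,z)$ is strictly increasing on the set where it is positive, so only countably many $z$ are affected. Hence $K_{A_\varphi}(z,\{z\})=0$ for $\lambda$-a.e.\ $z$, and the block contributes nothing to $\mu_A(\Delta)$. If this direct route proves awkward, there is an alternative: the $\mu_{A_\varphi}$-mass on $\Delta$ equals $\lim$ of masses of diagonal squares $\bigcup_k[\tfrac{k-1}{n},\tfrac kn]^2$, which can be bounded via the kernel formula exactly as in Step 2 of Lemma~\ref{lem:2}. A telescoping sum there gives a bound of order $\tfrac1n\cdot D^+\varphi(\epsilon)/D^+\varphi(\cdot)$ after truncating near $0$.

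On the complement of $U$, the mass of $\mu_A$ outside $U^2$ is concentrated on $\Delta$. This is stated in Step 1 of Lemma~\ref{lem:2}, and it also holds for the $M$-blocks. The marginal of $\mu_A$ restricted to $U^2$ is $\lambda|_U$, so the remaining mass $\mu_A([0,1]^2\setminus U^2)$ equals $1-\lambda(U)$, all of it on $\Delta$. Combining the two steps gives $\mu_A(\Delta)=1-\lambda(U)=\lambda(\mathrm{Id}_A)$.

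The main obstacle is the claim that an Archimedean copula puts zero mass on the diagonal, specifically the non-strict case, where $A_\varphi(z,z)$ may be $0$ and the singular zero-level curve could meet $\Delta$. That intersection is a single point, namely where $\psi(2\varphi(z))=0$ first holds with $z=f^0(z)$. It therefore has measure zero, but handling it carefully requires using the first line of \eqref{eq:kern.arch}. Alternatively, we could reduce to strict generators via denseness, but $\Delta$ is not a continuity set, so we would avoid that route.
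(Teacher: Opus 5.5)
Your proposal is correct and follows the paper's proof. The paper also locates the only possible atoms of $K_{A_\varphi}(z,\cdot)$ at $y=f^0(z)$ and at points where $A_\varphi(z,y)$ is a discontinuity of $D^+\varphi$. It then uses that $f^0$ is non-increasing and $z\mapsto A_\varphi(z,z)$ is strictly increasing where positive to get $K_{A_\varphi}(z,\{z\})=0$ for all but countably many $z$, and concludes via disintegration and the ordinal sum representation (no idempotents inside Archimedean blocks, $\mu_A=\mu_M$ elsewhere). Your explicit bookkeeping $\mu_A(\Delta)=1-\lambda(U)=\lambda(\mathrm{Id}_A)$ just spells out the paper's final sentence. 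The fallback via diagonal squares is not needed; if you did use it, you would also have to truncate near $1$, since $\vert D^+\varphi\vert$ may tend to $0$ there.
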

\begin{proof}
Let $A_\varphi$ be \tru{an arbitrary Archimedean copula} and $x\in(0,1)$. 
\tru{According to} eq.~\eqref{eq:kern.arch}, the probability measure $K_{A_\varphi}(x,\cdot)$ can only have atoms at $y=f^0(x)$ and at points $y$ for which $A_\varphi(x,y)$ is one of the (countably many) discontinuity points of $D^+\varphi$. Since $f^0$ is non-increasing and $x\mapsto A_\varphi(x,x)$ is strictly increasing where positive, $K_{A_\varphi}(x,\{x\})>0$ holds for at most countably many $x$, so disintegration yields $\mu_{A_\varphi}(\Delta)=0$. \\
For general $A\in\cdb^{assoc}$ the assertion now follows from the ordinal sum representation, since Archimedean \tru{copulas fulfill $A_\varphi(x,x) < x$ for all $x \in (0,1)$} and since outside the Archimedean blocks $\mu_A$ coincides with $\mu_M$.
\end{proof}

\begin{thm}\label{thm:assocZ}
Let $\mathcal{F}\subseteq\cdb$ be closed and convex with $\cdb^{assoc}\subseteq\mathcal{F}$. Then $\cdb^{assoc}$ is a $Z$-set in $(\mathcal{F},d_\infty)$. In particular, $\cdb^{assoc}$ is a $Z$-set in $\cconvh(\cdb^{assoc})$, in $\cdb^{sc}$ and in $\cdb$.
\end{thm}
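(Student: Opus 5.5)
Closedness is free: $\cdb^{assoc}$ is closed in $(\cdb,d_\infty)$, hence in $\mathcal{F}$. Since $\mathcal{F}$ is compact, it suffices to produce, for each constant $\varepsilon\in(0,1)$, a continuous $f:\mathcal{F}\to\mathcal{F}\setminus\cdb^{assoc}$ with $d_\infty(f(C),C)<\varepsilon$. I would mimic the proof of Lemma~\ref{lem} with a convex perturbation $f(C):=(1-\eta)C+\eta M$ for a small $\eta\in(0,1)$. This lies in $\mathcal{F}$ because $M\in\cdb^{assoc}\subseteq\mathcal{F}$ and $\mathcal{F}$ is convex. The map is continuous and satisfies $d_\infty(f(C),C)\le\eta\, d_\infty(M,C)\le\eta<\varepsilon$.

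The key step is to show that $f(C)$ is never associative. The tool is Lemma~\ref{lem:diag}, which gives $\mu_A(\Delta)=\lambda(\{x:A(x,x)=x\})$ for associative $A$. Put $D:=f(C)$ and suppose $D\in\cdb^{assoc}$. Every copula satisfies $C(x,x)\le x$, so $D(x,x)=x$ holds iff $C(x,x)=x$. Hence the idempotent sets $I_C:=\{x:C(x,x)=x\}$ and $I_D$ coincide. On the other hand, $\mu_D(\Delta)=(1-\eta)\mu_C(\Delta)+\eta$. So associativity of $D$ would force
\[
(1-\eta)\mu_C(\Delta)+\eta=\lambda(I_C).
\]

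I then need $\mu_C(\Delta)\ge\lambda(I_C)$ for arbitrary copulas $C$; with $\eta>0$ this yields a contradiction unless $\mu_C(\Delta)=1$, and I will have to handle that case separately. To prove the inequality, take $x\in I_C$. Then $\mu_C([0,x]^2)=x$, so $\mu_C([0,x]\times(x,1])=0$ and $\mu_C((x,1]\times[0,x])=0$. Thus on the closed set $I_C$ the mass cannot cross idempotents, and the measure $\mu_C$ restricted to $I_C\times[0,1]$ should live on the diagonal. I would argue this via the Markov kernel: for $\lambda$-a.e.\ density point $x$ of $I_C$, $K_C(x,\cdot)$ is concentrated on $\{x\}$. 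The reason is that for idempotents $y<x<z$ in $I_C$ the strip $(y,z]\times[0,1]$ has its mass inside $(y,z]^2$, and shrinking $y,z\to x$ pins $K_C(x,\cdot)$ at $x$. Disintegration then gives $\mu_C(\Delta)\ge\lambda(I_C)$. Combining this with the displayed identity gives $\eta(1-\mu_C(\Delta))\le 0$, so $\mu_C(\Delta)=1$, i.e.\ $C=M$ and $D=M$, which is associative. This is the obstacle: the perturbation fixes $M$.

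To avoid this, I would perturb toward a copula other than $M$. A natural choice is $f(C):=(1-\eta)C+\eta P_\eta$, where $P_\eta$ is an associative copula close to $M$ that has no nontrivial idempotents and zero diagonal mass, e.g.\ a strict Archimedean copula with $d_\infty(P_\eta,M)$ small. Then $D(x,x)<x$ for all $x\in(0,1)$, so $I_D=\{0,1\}$ and $\lambda(I_D)=0$. Associativity of $D$ would then force $\mu_D(\Delta)=0$, i.e.\ $\mu_C(\Delta)=0$ and $\mu_{P_\eta}(\Delta)=0$. So $f$ already works on $\{C:\mu_C(\Delta)>0\}$, but not on copulas with no diagonal mass. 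I would therefore combine both perturbations: $f(C):=(1-2\eta)C+\eta M+\eta P$ with $P$ strict Archimedean. Then $I_D=\{0,1\}$, while $\mu_D(\Delta)\ge\eta>0$, which contradicts Lemma~\ref{lem:diag}. This $f$ maps into $\mathcal{F}$ by convexity, is continuous, and satisfies $d_\infty(f(C),C)\le 2\eta<\varepsilon$. It also needs no inequality $\mu_C(\Delta)\ge\lambda(I_C)$ at all, so I would adopt this version and drop the earlier attempt. The "in particular" statements follow by taking $\mathcal{F}=\cconvh(\cdb^{assoc})$, $\cdb^{sc}$ and $\cdb$, each of which is closed, convex and contains $\cdb^{assoc}$.
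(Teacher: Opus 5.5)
Your final construction is correct and is essentially the paper's proof: the paper perturbs towards $P=\tfrac12(\Pi+M)$ via $g(C)=(1-\varepsilon)C+\varepsilon P$, which is your map $(1-2\eta)C+\eta M+\eta P$ with $P=\Pi$ and $\eta=\varepsilon/2$. The contradiction is the same as yours: the diagonal mass is at least $\varepsilon/2$, every $x\in(0,1)$ is a non-idempotent, and Lemma~\ref{lem:diag} rules out associativity.
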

\begin{proof}
\tru{Consider} $\varepsilon\in(0,1)$ and set $P:=\frac12(\Pi+M)\in\mathcal{F}$. The continuous mapping
$g$, \tru{given by $g(C):=(1-\varepsilon)C+\varepsilon P$,} maps $\mathcal{F}$ into $\mathcal{F}$ 
and obviously fulfills $d_\infty(g(C),C)\le\frac{\varepsilon}{2}$. For every $C\in\mathcal{F}$ we have $\mu_{g(C)}(\Delta)\ge\frac{\varepsilon}{2}\mu_M(\Delta)>0$, whereas $g(C)(x,x)\le(1-\varepsilon)x+\frac{\varepsilon}{2}(x^2+x)<x$ for every $x\in(0,1)$. 
\tru{Applying Lemma~\ref{lem:diag} therefore directly yields $g(C)\notin\cdb^{assoc}$ and the proof is 
complete.}
\end{proof}

\begin{cor}\label{cor:assocZ}
\tru{The following assertions hold:}
\begin{enumerate}[label=(\roman*)]
\item \tru{A topologically typical} element of $\cconvh(\cdb^{assoc})$ is not associative.
\item The closure of the set of extreme points of $\cconvh(\cdb^{assoc})$ is a $Z$-set in $\cconvh(\cdb^{assoc})$; in particular, it is nowhere dense.
\item There exists a homeomorphism $\Psi:\cconvh(\cdb^{assoc})\to\cdb^{sc}$ with $\Psi(A)=A$ for every $A\in\cdb^{assoc}$.
\end{enumerate}
\end{cor}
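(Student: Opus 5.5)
We treat the three assertions of Corollary~\ref{cor:assocZ} one at a time, using Theorem~\ref{thm:assocZ} with $\mathcal{F}=\cconvh(\cdb^{assoc})$ as the common input, together with Lemma~\ref{lem}, Theorem~\ref{thm:topo} and \cite[Theorem~3.4]{Sel}.

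For (i), recall from the discussion after the definition of $Z$-sets that every $Z$-set is closed and nowhere dense. Theorem~\ref{thm:assocZ} therefore shows that $\cdb^{assoc}$ is closed and nowhere dense in $\cconvh(\cdb^{assoc})$. Hence its complement in $\cconvh(\cdb^{assoc})$ is open and dense, and in particular co-meager, which is exactly the claim.

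For (ii), let $\mathrm{ext}$ denote the set of extreme points of $\cconvh(\cdb^{assoc})$. By \cite[Theorem~3.4]{Sel} we have $\mathrm{ext}\subseteq\cdb^{assoc}$. Since $\cdb^{assoc}$ is closed, the closure $\overline{\mathrm{ext}}$ is a closed subset of $\cdb^{assoc}$. Closed subsets of $Z$-sets are $Z$-sets (one uses the same perturbation maps), so $\overline{\mathrm{ext}}$ is a $Z$-set in $\cconvh(\cdb^{assoc})$ and hence nowhere dense.

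For (iii), the plan is a homeomorphism extension argument in Hilbert cubes. By Lemma~\ref{lem}, $X:=\cconvh(\cdb^{assoc})$ is homeomorphic to $\mathcal{H}$. By Theorem~\ref{thm:topo}, $\cdb^{sc}\cong[0,1]\times\mathcal{H}\cong\mathcal{H}$. The set $K:=\cdb^{assoc}$ is a $Z$-set in $X$ by Theorem~\ref{thm:assocZ}, with $\mathcal{F}=\cconvh(\cdb^{assoc})$, and also a $Z$-set in $\cdb^{sc}$ by the same theorem with $\mathcal{F}=\cdb^{sc}$.

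We then invoke the $Z$-set unknotting theorem (Anderson's homeomorphism extension theorem, see \cite{vanMill1989,Sakai2020}). It states that any homeomorphism between $Z$-sets of the Hilbert cube extends to a homeomorphism of the whole Hilbert cube. To apply it, fix a homeomorphism $\alpha:X\to\mathcal{H}$ and a homeomorphism $\beta:\cdb^{sc}\to\mathcal{H}$. Then $\alpha(K)$ and $\beta(K)$ are $Z$-sets in $\mathcal{H}$, since the $Z$-set property is topological. The map $\beta\circ\alpha^{-1}|_{\alpha(K)}$ is a homeomorphism $\alpha(K)\to\beta(K)$, so it extends to a homeomorphism $\Theta:\mathcal{H}\to\mathcal{H}$. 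Finally set $\Psi:=\beta^{-1}\circ\Theta\circ\alpha$. For $A\in K$ this gives $\Psi(A)=\beta^{-1}(\beta(\alpha^{-1}(\alpha(A))))=A$, as required.

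The main obstacle is (iii). The arguments for (i) and (ii) are formal, whereas (iii) requires correctly citing the unknotting theorem and checking its hypotheses: both ambient spaces must be homeomorphic to the Hilbert cube, and $K$ must be a $Z$-set in each of them. Both hypotheses are supplied by the results cited above.
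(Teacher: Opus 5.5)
Your proposal is correct and follows the paper's proof essentially step by step. Parts (i) and (ii) go through Theorem~\ref{thm:assocZ}, \v{S}eliga's Theorem~3.4 and the fact that closed subsets of $Z$-sets are $Z$-sets, and part (iii) uses Anderson's $Z$-set unknotting theorem applied to the identity on $\cdb^{assoc}$. The differences are only cosmetic: you obtain $\cdb^{sc}\cong\mathcal{H}$ from Theorem~\ref{thm:topo} together with $[0,1]\times\mathcal{H}\cong\mathcal{H}$ instead of citing \cite{Liu2024a}, and you write out the conjugation by homeomorphisms onto $\mathcal{H}$, which the paper leaves implicit.
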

\begin{proof}
\tru{Assertion} (i) is an immediate consequence of Theorem~\ref{thm:assocZ}.  According to \cite[Theorem~3.4]{Sel}, every extreme point of $\cconvh(\cdb^{assoc})$ is associative, so the closure of the set of extreme points is a closed subset of the $Z$-set $\cdb^{assoc}$. \tru{This shows Assertion (ii)}. \\
\tru{Finally}, both $\cconvh(\cdb^{assoc})$ and $\cdb^{sc}$ are homeomorphic to $\mathcal{H}$ (Lemma~\ref{lem} and \cite{Liu2024a}), and $\cdb^{assoc}$ is a $Z$-set in both of them by Theorem~\ref{thm:assocZ}. \tru{The desired} assertion therefore follows from Anderson's $Z$-set unknotting theorem (see \cite{vanMill1989}), applied to the identity on $\cdb^{assoc}$.
\end{proof}

We note that Lemma~\ref{lem:diag} also yields a necessary condition related to \cite[Open Question~1]{Sel}, which asks whether a non-trivial convex combination of two distinct associative copulas can be associative: if $A,B\in\cdb^{assoc}$, $t\in(0,1)$ and $tA+(1-t)B$ is associative, then the sets $I_A$ and $I_B$ of idempotents of $A$ and $B$ coincide up to a $\lambda$-null set. In fact, the set of idempotents of $tA+(1-t)B$ is $I_A\cap I_B$, so applying Lemma~\ref{lem:diag} to $A$, $B$ and $tA+(1-t)B$ yields $\lambda(I_A\cap I_B)=t\lambda(I_A)+(1-t)\lambda(I_B)$, which is only possible if $\lambda(I_A)=\lambda(I_B)=\lambda(I_A\cap I_B)$.

\section{Conclusions}\label{sec:concl}
We have given an alternative negative answer to Problem 2.5 in \cite{MeSa}, which is based on
Markov kernels of Archimedean copulas: the mass that an associative copula can assign to a staircase of rectangles below the main diagonal is bounded by the mesh of the staircase (Lemma~\ref{lem:2} and
 Proposition~\ref{prop:staircase}). Consequently, no element of $\cconvh(\cdb^{assoc})$ 
\tru{assigns \new{positive} mass to a} chain outside the main diagonal (Theorem~\ref{thm:chains}), whereas half of the mass of a Schur concave copula can be placed on such a chain (Lemma~\ref{lem:halfM}). 
This \tru{very observation} yields a large and explicit class of Schur concave copulas outside $\cconvh(\cdb^{assoc})$ (Corollary~\ref{cor:outside}). This obstruction is complementary to the one used in \cite{Sel}: it never excludes Schur concave shuffles of $M$ such as the copulas in \cite{Sel}, whereas the copulas in Corollary~\ref{cor:outside} are not extreme points of $\cdb$, so the extremality argument of \cite{Sel} does not apply to them (Remark~\ref{rem:seliga}). In particular, the conditions in Theorem~\ref{thm:chains} are necessary but not sufficient for membership in $\cconvh(\cdb^{assoc})$ (Proposition~\ref{prop:flip}).

From a topological \tru{\new{perspective}}, $\cconvh(\cdb^{assoc})$ is a very small subset of $\cdb^{sc}$: it is a $Z$-set, so the pair $\big(\cdb^{sc},\cconvh(\cdb^{assoc})\big)$ is homeomorphic to $\big([0,1]\times\mathcal{H},\{0\}\times\mathcal{H}\big)$ (Theorem~\ref{thm:topo}), and the Schur concave copulas outside $\cconvh(\cdb^{assoc})$ form an open dense subset of $\cdb^{sc}$ which is homeomorphic to $(0,1]\times\mathcal{H}$ (Corollary~\ref{cor:topo}). In the same spirit, $\cdb^{assoc}$ is a $Z$-set in every closed convex family of copulas containing it (Theorem~\ref{thm:assocZ}), so the closure of the set of extreme points of $\cconvh(\cdb^{assoc})$ is nowhere dense in $\cconvh(\cdb^{assoc})$ (Corollary~\ref{cor:assocZ}). A complete description of these extreme points (Problem 2.6 in \cite{MeSa}; see \cite{Sel} for a reformulation of its second part) remains open.

\section*{Acknowledgements}
\noindent The first author gratefully acknowledges the support of the WISS project
`IDA-lab Salzburg’ (20204-WISS/225/197-2019 and 20102-F1901166 KZP).


\begin{thebibliography}{99}

\bibitem{Alsina84}
\new{C. Alsina, On Schur-concave t-norms and triangle functions, in: General Inequalities 4 (Oberwolfach, 1983), Internat. Schriftenreihe Numer. Math. 71, Birkh\"auser, Basel, 1984, pp. 241-248.}

\bibitem{Al}
C. Alsina, B. Schweizer, M. Frank, Associative Functions, World Scientific Publishing, Singapore, 2006.

\bibitem{DFST}
F. Durante, J. Fern\'andez S\'anchez, W. Trutschnig, On families of copulas and quasi-copulas homeomorphic to the Hilbert cube and related results, preprint submitted to arXiv.

\bibitem{DP}
F. Durante, P.L. Papini, A weakening of Schur-concavity for copulas, Fuzzy Sets Syst. 158 (2007) 1378-1383.

\bibitem{DuSe03}
\new{F. Durante, C. Sempi, Copul\ae\ and Schur-concavity, Internat. Math. J. 3 (2003) 893-905.}

\bibitem{DuSe}
F. Durante, C. Sempi, Principles of Copula Theory, CRC Press, Boca Raton FL, 2016.

\bibitem{FST}
J. Fern\'andez S\'anchez, W. Trutschnig, Singularity aspects of Archimedean copulas, J. Math. Anal. Appl. 432 (2015) 103-113.

\bibitem{Ka}
O. Kallenberg, Foundations of Modern Probability, Springer, New York, 2002.

\bibitem{KFT}
T. Kasper, S. Fuchs, W. Trutschnig, On convergence of associative copulas and related results, Depend. Model. 9 (2021) 307-326.

\bibitem{KFT2}
T. Kasper, S. Fuchs, W. Trutschnig, On weak conditional convergence of bivariate Archimedean and Extreme Value copulas, and consequences to nonparametric estimation, Bernoulli 27 (4) (2021) 2217-2240.

\bibitem{KMP}
E.P. Klement, R. Mesiar, E. Pap, Uniform approximation of associative copulas by strict and non-strict copulas, Illinois J. Math. 45 (4) (2001) 1393-1400.

\bibitem{Ling}
C.H. Ling, Representation of associative functions, Publ. Math. Debrecen 12 (1965) 189-212.

\bibitem{Liu2024a}
D. Liu, The topological space of Schur-concave copulas is homeomorphic to the Hilbert cube, Inform. Sci. 678 (2024) 120909.

\bibitem{MeSa}
R. Mesiar, P. Sarkoci, Open problems posed at the tenth International Conference on Fuzzy Set Theory and Applications (FSTA 2010, Liptovsk\'y J\'an, Slovakia), Kybernetika 46 (4) (2010) 585-599.

\bibitem{Ru}
\tru{W. Rudin, Real and Complex Analysis, \new{3rd edition,} McGraw-Hill International \new{Editions}, Singapore, 1987.}

\bibitem{Sakai2020}
K. Sakai, Topology of Infinite-Dimensional Manifolds, Springer Monographs in Mathematics, Springer, Singapore, 2020.

\bibitem{Sel}
\new{A. \v{S}eliga, A Schur-concave shuffle of $M$ outside the closed convex hull of associative copulas, Iranian J. Fuzzy Syst. 23 (6) (2026) 223-238.}

\bibitem{p06}
\tru{W. Trutschnig, On a strong metric on the space of copulas and its induced dependence measure, J. Math. Anal. Appl. 384 (2011) 690-705.}

\bibitem{vanMill1989}
J. van Mill, Infinite-Dimensional Topology. Prerequisites and Introduction, North-Holland Publishing Co., Amsterdam, 1989.

\end{thebibliography}
\end{document}